\title{Homogeneous surfaces admitting invariant connections}
\author[D. Bl\'azquez-Sanz]{David Bl\'azquez-Sanz}
\author[L. F. Jim\'enez Buitrago]{Luis Fernando Jim\'enez Buitrago}
\author[C. Mar\'in]{Carlos Alberto Mar\'in Arango}
\address{Instituto de Matem\'aticas \hfill\break\indent  Universidad de Antioquia \hfill\break\indent Medell\'in, Colombia}
\email{calberto.marin@matematicas.udea.edu.co}
\address{Universidad Nacional de Colombia - Sede Medell\'in \hfill\break\indent  Facultad de Ciencias 
\hfill\break\indent Escuela de Matem\'aticas \hfill\break\indent  Medell\'in, Colombia}
\email{dblazquezs@unal.edu.co}
\email{lufjimenezbu@unal.edu.co}
\subjclass[2010]{53C30; 53B05}
\keywords{Homogeneous manifold, infinitesimally homogeneous manifold, homogeneous surface, invariant connection.}
\date{June 2019}
\begin{document}

\makeatletter
\newenvironment{dem}{Proof:}{\qed}

\makeatother

\numberwithin{equation}{section}
\theoremstyle{plain}\newtheorem{theorem}{Theorem}[section]
\theoremstyle{plain}\newtheorem{proposition}[theorem]{Proposition}
\theoremstyle{plain}\newtheorem{lemma}[theorem]{Lemma}

\theoremstyle{definition}\newtheorem{definition}[theorem]{Definition}
\theoremstyle{remark}\newtheorem{remark}[theorem]{Remark}

\theoremstyle{definition}\newtheorem{example}[theorem]{Example}

\newcommand{\fn}[5]{
            \begin{array}{cccl}
                #1: & #2 & \longrightarrow &  {#3}\\
                    & #4 & \longmapsto & {#1}(#4)={#5}
            \end{array}}
            
\newcommand{\na}[3]{\left( \mathcal{L}_{#1}\nabla \right)_{#2}{#3}=0} 

\newcommand{\dx}{\partial_x}
\newcommand{\dy}{\partial_y}
\newcommand{\dxi}{\partial_{x_i}}
\newcommand{\dxj}{\partial_{x_j}}
\newcommand{\dxk}{\partial_{x_k}}
\newcommand{\dxu}{\partial_{x_1}}
\newcommand{\dxd}{\partial_{x_2}}
\newcommand{\dt}{\partial_{\theta}}
\newcommand{\dr}{\partial_{r}}

\def\homo{\mathrm{Hom}}
\def\L{\mathcal{L}}
\def\r{\mathbb{R}}
\def\n{\mathbb{N}}
\def\q{\mathbb{Q}}
\def\z{\mathbb{Z}}
\def\so{\text{SO}}
\def\o{\text{O}}
\def\s{\mathbb{S}}
\def\c{\mathbb{C}}
\def\g{\mathfrak{g}} 
\def\cp{\mathbb{CP}} 
\def\fibP{\pi : P\to M}
\def\fibV{\pi : E\to M}
\def\d{\mathrm{d}}
\def\h{\mathcal{H}}
\def\gl{{\rm gl}(n)}
\def\X{\mathfrak{X}}
\def\lin{\rm Lin}
\def\gij{\Gamma_{ij}^l}
\def\gu{\Gamma_{11}}
\def\gd{\Gamma_{22}}
\def\gud{\Gamma_{12}}
\def\gdu{\Gamma_{21}}

\maketitle

\begin{abstract}
    We compute all the simply connected homogeneous and infinitesimally homogeneous surfaces admitting one or more invariant affine connections. We find exactly six non equivalent simply connected homogeneous surfaces admitting more than one invariant connections and four classes of simply connected homogeneous surfaces admitting exactly one invariant connection.
\end{abstract}

\section{Introduction}

From the XIXth century on geometry has been understood by means of its transformation groups. An \emph{homogeneous} space $M$ is a smooth manifold endowed with a \emph{transitive smooth action} of a Lie group $G$. In particular, if $H$ is the stabilizer subgroup of a point $x\in M$ then the space $M$ can be recovered as the coset space $G/H$.  Since the group $G$ acting in $M$ is seen as a transformation group, i.e., a subgroup of ${\rm Diff}(M)$ there is no loss of generality in assuming, whenever it is required, that the action of $G$ in $M$ is faithful.

The notion of homogeneous space has an infinitesimal version. A \emph{infinitesimal homogeneous space} is a manifold endowed with a \emph{Lie algebra representation} of a Lie algebra $\mathfrak g$ into the Lie algebra of smooth vector fields $\mathfrak X(M)$. As before, since the lie Lie algebra $\mathfrak g$ is seen as a Lie algebra of vector fields, there is no loss of generality in assuming, whenever it is required, that the representation is faithful.

The heuristics of the celebrated F. Klein's \emph{Erlangen program} is that geometry should studied through the invariants of the action of $G$ on $M$. As example, the invariants of the group of euclidean movements in the euclidean space are the distances, volumes, angles, etc.

An \emph{affine connection} in a manifold $M$ is certain geometric construction that allows a notion of parallel transport of vectors. Affine connections are a kind of geometric structures, and then they are transformed covariantly by diffeomorphisms. The problem of construction of invariant connections in an homogeneous space has been treated by S. Kobayashi and K. Nomizu in a series of papers that are summarized in \cite{kobayashi1996foundations2} chapter X. In several examples of homogeneous spaces, there is a unique invariant connection. 

We ask the following question.  Which homogeneous spaces of a fixed dimension do admit exactly one, or strictly more than one invariant connections? It is possible to give a complete list? We can solve this problem for low dimension manifolds. We start with the local problem, for infinitesimal homogeneous spaces. The point is that the structures of infinitesimal homogeneous spaces admitted by a surface are completely classified. S. Lie, who completed in \cite{Lie} the task for Lie algebras of analytic vector fields in $\mathbb C^2$ and $\mathbb C^3$. The local classification of finite dimensional Lie algebras of smooth vector fields in $\mathbb R^2$ was completed by González-López, Kamran and Olver in \cite{Olver}. The real list is larger than the complex one as some complex Lie algebra may have several possible real forms. A completely different approach leading to the same classifications was followed by Komrakov, Churyumov and Doubrov in \cite{komrakov}. 
By means of the Lie derivative of connections (Definition \ref{df:LD_connection} and Lemma \ref{lm:Lie_connection_zero}) we compute the space of invariant connections by an infinitesimal action of Lie algebra. Then, 
we go through the list of all possible structures of infinitesimal homogeneous surfaces (Tables \ref{tabla:acciones primitivas}, \ref{tabla:acciones no transitivas}, \ref{tabla:imprimitive actions}) obtaining the spaces of invariant connection for each local class of infinitesimal actions (Theorems \ref{th:primitive} and \ref{th:imprimitive}). We also perform the computation of the space of invariant connections for non-transitive actions, for the shake of completeness (Propositions \ref{pr:no_transitive1} and \ref{pr:no_transitive2}). Finally we integrate our results to obtain homogeneous surfaces admitting invariant connections. We obtain that there is a finite list of simply connected homogeneous surfaces admitting invariant connections. In particular there are $6$ non-equivalent simply connected homogeneous surfaces admitting more than one invariant connection. Our main results are the following:

\medskip

\noindent{\bf Theorem \ref{th:hs}} \emph{
Let $M$, endowed with an action of a connected Lie group $G$, be a simply connected homogeneous surface. Let us assume that $M$ admits at least two $G$-invariant connections. Then, $M$ is equivalent to one of the following cases:
\begin{enumerate}
\item[(a)] The affine plane $\mathbb R^2$ with one of the following transitive groups of affine transformations:
$${\rm Res}_{(2:1)}(\mathbb R^2) = \left\{ \left[ \begin{array}{c}x\\ y\end{array}\right] \mapsto A \left[ \begin{array}{c}x\\ y\end{array}\right] +  \left[ \begin{array}{c}b_1\\ b_2\end{array}\right]\mid A = 
\left[ \begin{array}{cc} \lambda^2 & 0 \\ 0 & \lambda \end{array} \right],\,\lambda>0
\right\},$$
$${\rm Trans}(\mathbb R^2) = \left\{  \left[ \begin{array}{c}x\\ y\end{array}\right] \mapsto  \left[ \begin{array}{c}x\\ y\end{array}\right] +  \left[ \begin{array}{c}b_1\\ b_2\end{array}\right] \mid b_1,b_2\in\mathbb R \right\},$$
$${\rm Res}_{(0:1)}(\mathbb R^2) = \left\{  \left[ \begin{array}{c}x\\ y\end{array}\right] \mapsto A  \left[ \begin{array}{c}x\\ y\end{array}\right] +  \left[ \begin{array}{c}b_1\\ b_2\end{array}\right]\mid A = 
\left[ \begin{array}{cc} 1 & 0 \\ 0 & \lambda \end{array} \right],\,\lambda>0
\right\}.$$
\item[(b)] ${\rm SL}_2(\mathbb R)/U$ where $U$ is the subgroup of superior unipotent matrices
$$ U = \left\{ A\in {\rm SL}_2(\mathbb R) \mid A = 
\left[ \begin{array}{cc} 1 & \lambda \\ 0 & 1 \end{array} \right]
\right\}.$$
\item[(c)] $\mathbb R\ltimes \mathbb R$ acting on itself by left translations. 
\item[(d)] $G/H$ with $G = \mathbb R^2\ltimes \mathbb R$ and $H = (\mathbb R\times 0)\ltimes 0$. 
\end{enumerate}
}

\medskip

\noindent{\bf Theorem \ref{th:hs2}} \emph{ 
Let $M$, with the action of $G$, be a simply connected homogeneous surface. Let us assume that $M$ admits exactly one $G$-invariant connection. Then, $M$ is equivalent to one of the following cases:
\begin{enumerate}
\item[(a)] The affine plane $\mathbb R^2$ with $G$ any connected transitive subgroup of the group ${\rm Aff}(\mathbb R^2)$ of affine transformations containing the group of translations and not conjugated with any of the groups,
$${\rm Res}_{(2:1)}(\mathbb R^2) = \left\{ \left[ \begin{array}{c}x\\ y\end{array}\right] \mapsto A \left[ \begin{array}{c}x\\ y\end{array}\right] +  \left[ \begin{array}{c}b_1\\ b_2\end{array}\right]\mid A = 
\left[ \begin{array}{cc} \lambda^2 & 0 \\ 0 & \lambda \end{array} \right],\,\lambda>0
\right\},$$
$${\rm Trans}(\mathbb R^2) = \left\{  \left[ \begin{array}{c}x\\ y\end{array}\right] \mapsto  \left[ \begin{array}{c}x\\ y\end{array}\right] +  \left[ \begin{array}{c}b_1\\ b_2\end{array}\right] \mid b_1,b_2\in\mathbb R \right\},$$
$${\rm Res}_{(0:1)}(\mathbb R^2) = \left\{  \left[ \begin{array}{c}x\\ y\end{array}\right] \mapsto A  \left[ \begin{array}{c}x\\ y\end{array}\right] +  \left[ \begin{array}{c}b_1\\ b_2\end{array}\right]\mid A = 
\left[ \begin{array}{cc} 1 & 0 \\ 0 & \lambda \end{array} \right],\,\lambda>0
\right\}.$$
(this case includes the euclidean plane)
\item[(b)] ${\rm SL}_2(\mathbb R)/H$ where $H$ is the group of special diagonal matrices,
$$H = \left\{ A \in {\rm SL}_{2}(\mathbb R) \mid  
A =  \left[ \begin{array}{cc} \lambda & 0 \\ 0 & \lambda^{-1} \end{array} \right], \,\, \lambda>0 \right\}.$$
\item[(c)] The hyperbolic plane 
$$\mathbb H = \{z\in\mathbb C \mid {\rm Im}(z)>0 \}$$ 
with the group ${\rm SL}_2(\mathbb R)$ of hyperbolic rotations.
\item[(d)] Spherical surface, 
$$S^2 = \{(x,y,z)\in \mathbb R^3\mid x^2 + y^2 + z^2 =1 \}$$ 
with its group ${\rm SO}_3(\mathbb R)$ of rotations.
\end{enumerate}
}

\medskip

Some related work has been presented by Kowalski, Opozda, Vl\'a\v{s}ek and Arias-Marco \cite{kowalski2008, kowalski2004, opozda2004} who gave answer to a dual question in the local case: what are the local canonical forms of affine connections in surfaces whose symmetries act transitively? Their methods are similar to ours in the following sense: their results and ours are obtained through a careful analysis of the local classification of Lie algebra actions given in \cite{Olver}.

\section{Invariant connections}

Here we review the construction of the space of connections, the bundle of connections, and the action of diffeomorphisms and vector fields on them, see for instance \cite{castrillon, garcia}. Let $M$ be a smooth manifold, $\mathfrak X(M)$ the Lie algebra of smooth vector fields in $M$ and ${\rm End}_M(\mathfrak X(M))$ the module of $\mathcal C^{\infty}(M)$-linear endomorphisms of $\mathfrak X(M)$. An \emph{affine connection} in $M$ is a $\mathbb R$-linear map
$$\nabla \colon \mathfrak X(M)\to {\rm End}_M(\mathfrak X(M)),$$
such that $\nabla(fX) = df\otimes X + f\nabla X$.\footnote{In order to keep the usual notation for covariant derivatives we have $\nabla(X)(Y) = \nabla_Y X$.}
Let ${\rm Cnx}(M)$ the space of all affine connections in $M$. It is easy to check that if $\nabla\in {\rm Cnx}(M)$ is an affine connection and $\theta\in \Omega^1(M,{\rm End}(TM))$ is a $1$-form in $M$ with values endomorphisms of $TM$ then $\nabla+\theta$ is an affine connection in $M$. Indeed, the difference between two connections in a $1$-form with values endomorphisms of $TM$. Therefore, we have a free and transitive additive action,
$$\Omega^1(M,{\rm End}(TM))\times {\rm Cnx}(M) \to {\rm Cnx}(M),\quad
(\theta,\nabla)\to \nabla + \theta,$$
that gives to ${\rm Cnx}(M)$ the structure of an affine space modeled over the vector space $\Omega^1(M,{\rm End}(TM))$.

It is clear the connections are local objects, they can be restricted to open subsets of $M$, and they are determined by its restrictions to a covering family of open subsets. The following construction is local.  
Given a global frame\footnote{That is, and ordered base of sections of the tangent bundle of $M$.} $s = (X_1,\ldots,X_n)$ in $M$ there is an associated connection $\nabla^s$ such that $\nabla^s(X_i)=0$ for $i\in \{1,\ldots,n\}$.
We taking $\nabla^s$ as the initial point of ${\rm Cnx}(M)$ we have an identification,
$$\Gamma_s \colon {\rm Cnx}(M) \xrightarrow{\sim} \Omega^1(M,{\rm End}(TM)), \quad \nabla \mapsto \Gamma_s(\nabla) = \nabla - \nabla^s.$$
Here $\Gamma_s(\nabla)$ is the so-called Christoffel tensor of $\nabla$ with respect to the frame $s$. We say that two connections $\nabla$ and $\bar\nabla$ take the same value at the point $p$ of $M$ if their Christofell tensors with respect to any frame coincide on $p$. The set $CM$ of the values at points of $M$ of affine connections is therefore an affine bundle over $M$ modeled over $T^*M\otimes {\rm End}(TM)$. An affine connection $\nabla$ is thus seen in a canonical way as a global section of the affine bundle $CM\to M$.

Given a diffeomorphism $f\colon M\to N$ the push forward of connections is an isomorphism $f_*\colon {\rm Cnx}(M)\to {\rm Cnx}(N)$ defined in terms of the push forward of vector fields by $(f_*\nabla)_XY = f_*(\nabla_{f^{-1}_*X} f^{-1}_*Y)$. The push forward of connections is compatible with the affine structure, in the sense that $f_*(\nabla + \theta) = f_*(\nabla) + f_*(\theta)$ for a connection $\nabla$ and a $1$-form of endomorphisms $\theta$.

If we look at the connections as sections of the affine bundle $CM$, then we have that the push forward is given by a natural transformation $Cf$ of affine bundles,
\begin{equation}\label{eq:diagrama1}
\xymatrix{ CM\ar[d] \ar[r]^-{Cf} & CN \ar[d] \\ 
M \ar[r]^-{f} \ar@/^20pt/[u]^-{\nabla} & N \ar@/^-20pt/[u]_-{f_*\nabla}} 
\end{equation}
here $(Cf)(\nabla(p)) = (f_*\nabla)(f(p))$.

Let us fix an smooth action\footnote{By convention, we consider the action on the left side, we use the standard notation $gp = L_g(p)$.} of a Lie group $G$ on $M$. A tensor $\theta$ is $G$-invariant if $(L_g)_*\theta = \theta$ for all $g$ in $G$. The space of $G$-invariant $1$-forms of endomorphisms is denoted by $\Omega^1(M,{\rm End}(TM))^G$. Analogously, we say that a connection $\nabla\in{\rm Cnx}(M)$ is $G$-invariant if $(L_g)_*\nabla=\nabla$, for all $g\in G$; we denote by ${\rm Cnx}(M)^G$ the set of $G$-invariant connections in $M$. It is clear that the sum of a $G$-invariant connection and a $G$-invariant $1$-form of endomorphisms yields a $G$-invariant connection. So that, ${\rm Cnx}(TM)^G$ is either empty or an a affine space modeled over the space  $\Omega^1(M,\textrm{End}(TM))^G$. 

Let us denote by $G_p$ the stabilizer\footnote{That is, the subgroup of $G$ formed by the elements $g\in G$ such that $gp = p$.} of a point $p\in M$. We have an action of $G_p$ on the fiber $(CM)_p$ by affine transformations.
There is also a natural linear representation of $G_p$ in $T_p^*M \otimes {\rm End}(T_pM)$.

\begin{proposition}\label{pr:inv1}
 If $G$ acts transitively on $M$ then ${\rm Cnx}(M)^G$ has finite real dimension $\leq (\dim M)^3$. In such case:
 \begin{enumerate}
     \item ${\rm Cnx}(M)^G \simeq (CM)_p^{G_p}$ for any $p\in M$
     \item If ${\rm Cnx}(M)^G\neq \emptyset$ then
     $\dim {\rm Cnx}(M)^G = \dim \left(T_p^*M \otimes {\rm End}(T_pM)\right)^{G_p}$.
 \end{enumerate}
\end{proposition}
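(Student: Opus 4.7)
The plan is to show that the evaluation-at-a-point map
$${\rm ev}_p\colon {\rm Cnx}(M)^G \longrightarrow (CM)_p^{G_p},\qquad \nabla\longmapsto \nabla(p),$$
is a well-defined affine bijection. Transitivity of $G$ forces a $G$-invariant global section of $CM\to M$ to be determined by its value at any single point, while $G$-invariance restricts that value to the $G_p$-fixed locus of the fiber $(CM)_p$.

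First I would check that ${\rm ev}_p$ lands in $(CM)_p^{G_p}$: for $g\in G_p$ and $\nabla\in{\rm Cnx}(M)^G$, diagram \eqref{eq:diagrama1} gives
$$(CL_g)(\nabla(p)) = (L_{g*}\nabla)(gp) = \nabla(p),$$
using $G$-invariance of $\nabla$ in the last step. For injectivity, if $\nabla_1(p)=\nabla_2(p)$ and $q=gp$ is any other point, then
$$\nabla_i(q) = (L_{g*}\nabla_i)(gp) = (CL_g)(\nabla_i(p)),$$
so the two connections agree at $q$. For surjectivity, given $c\in (CM)_p^{G_p}$ I set $\nabla(q):=(CL_g)(c)$ for any $g$ with $gp=q$; the $G_p$-invariance of $c$ makes this independent of the chosen $g$, and $G$-invariance of the resulting section is built in by construction. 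This establishes~(1).

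The fiber $(CM)_p$ is an affine space modeled on $T_p^*M\otimes{\rm End}(T_pM)$, which has real dimension $(\dim M)^3$; hence $(CM)_p^{G_p}$ has dimension at most $(\dim M)^3$ and, together with~(1), this bounds ${\rm Cnx}(M)^G$. When ${\rm Cnx}(M)^G$ is non-empty so is $(CM)_p^{G_p}$, and the latter is then an affine subspace whose underlying direction is the $G_p$-fixed subspace $(T_p^*M\otimes{\rm End}(T_pM))^{G_p}$; its dimension coincides with that of its model vector space, yielding~(2).

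The one delicate point I foresee is verifying smoothness of the section produced in the surjectivity step. To handle it, I would pick a smooth local section $\sigma\colon U\to G$ of the principal bundle $G\to G/G_p\simeq M$ around $p$, so that $\nabla(q)=(CL_{\sigma(q)})(c)$ is manifestly smooth on $U$, and then translate this neighborhood by the $G$-action to cover all of $M$. Apart from this, everything follows formally from transitivity and from the compatibility of the push-forward $Cf$ with the affine bundle structure on $CM$ recorded in the previous subsection.
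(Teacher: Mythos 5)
Your proposal is correct and follows essentially the same route as the paper: evaluation at a point $p$, an inverse built from the transitive $G$-action via $\nabla(gp)=(CL_g)(c)$, and the affine structure of the fiber $(CM)_p$ modeled on $T_p^*M\otimes{\rm End}(T_pM)$ to get the dimension bound and part (2). You simply spell out details the paper leaves implicit (well-definedness via $G_p$-invariance, injectivity, and smoothness through a local section of $G\to G/G_p$), which is a sound elaboration rather than a different argument.
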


\begin{proof}
(1) There is a natural map ${\rm Cnx}(M)\to (CM)_p$ of evaluation at $p$, $\nabla\mapsto \nabla(p)$. It is clear that it maps ${\rm Cnx}(M)^G$ on $(CM)_p^G$. By the action of $G$ on $M$ we construct an inverse map,
$$(CM)_p^G \xrightarrow{\,\sim\,} {\rm Cnx}(M), \quad c_p\mapsto \nabla$$
where $\nabla$ is defined as $\nabla(gp) = C(L_g)(c_p)$.

(2) It is also clear that if $(CM)_p$ is a linear affine space modeled over $T_p^*M \otimes {\rm End}(T_pM)$. The action of $G_p$ is compatible with the affine structure, therefore $(CM)_p^G$, if not empty, is a linear affine space modeled over $(T_p^*M \otimes {\rm End}(T_pM))^{G_p}$. 
\end{proof}

The previous situation can be considered infinitesimally. Let us recall that an action of a Lie algebra $\g$ in $M$ is a Lie algebra morphism  $\phi:\g \to \mathfrak{X}(M)$. Without loss of generality we may assume that the action is faithful, that is, that $\phi$ is inyective. In such case we see $\mathfrak g\subset\mathfrak X(M)$ as a Lie algebra of vector fields in $M$. Therefore, from now on we will consider the elements of $\mathfrak g$ as vector fields in $M$.

We say that a tensor $\theta$ in $M$ is $\mathfrak g$-invariant if for any local diffeomorphism $\sigma \colon U\xrightarrow{\sim} V$ that belongs to the flow pseudogroup of a vector field $X\in \mathfrak g$ we have $\sigma_*(\theta)|_{V} = \theta|_{V}$. The analogous definition of $\mathfrak g$-invariance applies to connections. As before, the space of $\mathfrak g$-invariant connections ${\rm Cnx}(M)^{\mathfrak g}$ is either empty of an affine space modeled over $\Omega^1(M,\textrm{End}(T(M)))^{\mathfrak g}$. 

Let us recall that the $\mathfrak g$ acts transitively on $M$ if for all $p\in M$ the map $\phi_p\colon \mathfrak g\to T_pM$ $X\mapsto X_p$ is surjective. This implies that, for connected $M$, the flow pseudogroup of the action is also transitive.

Diagram \eqref{eq:diagrama1} in the particular case of $f = \sigma_t$, the time $t$ flow of a vector field $X$ in $M$, gives us the prolongation $\tilde X\in \mathfrak X(CM)$,
$$\tilde X = \left.\frac{d}{dt}\right|_{t=0}C\sigma_t.$$
It is clear that $\tilde X$ projects onto $X$ and the flow of $\tilde X$ gives affine transformations between the fibers of $CM\to M$. For a given $p\in M$ the kernel $\ker(\phi_p) = \mathfrak g_p$ is the so-called \emph{stabilizer algebra} of the point $p$. There is a natural representation,
$$\tilde \phi|_{CM_p}\colon \mathfrak g_p \to \mathfrak X(CM_p)$$
the set of zeroes in $CM_p$ of these vector fields is an affine subspace denoted $CM_p^{\mathfrak g_p}$.

\begin{proposition}\label{prop:finitedim_inf}
Let us assume that $\mathfrak g$ acts transitively in a connected manifold $M$. Then the space ${\rm Cnx}(M)^{\g}$ has finite real dimension. In such case
${\rm Cnx}(M)^{\g} \simeq  CM_p^{\mathfrak g_p}$ for any $p\in M$.
\end{proposition}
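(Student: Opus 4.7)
The strategy is to imitate the proof of Proposition \ref{pr:inv1}, replacing finite group actions with infinitesimal flow-pseudogroup actions. The plan has three steps: construct an evaluation map $\mathrm{ev}_p\colon {\rm Cnx}(M)^{\g}\to CM_p^{\g_p}$, show it is injective by a flow-propagation argument, and show it is surjective by a flow-transport construction. The finiteness statement will then be immediate, since $CM_p^{\g_p}$ sits inside the fiber $CM_p$, an affine space modeled on $T_p^*M\otimes {\rm End}(T_pM)$ of dimension $(\dim M)^3$.

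First I would define $\mathrm{ev}_p(\nabla) = \nabla(p)$ and verify its image lies in $CM_p^{\g_p}$. If $\nabla$ is $\g$-invariant, then for any $X\in\g$ the Lie derivative $\L_X\nabla$ vanishes (by Lemma \ref{lm:Lie_connection_zero}). Unwinding the diagram \eqref{eq:diagrama1} at $p$ for the flow $\sigma_t$ of $X$, the prolongation $\tilde X$ satisfies $\tilde X_{\nabla(p)} = \frac{d}{dt}\big|_{t=0} C\sigma_t(\nabla(p))$. When $X\in\g_p$ we have $\sigma_t(p)=p$, so $C\sigma_t$ restricts to a flow on $CM_p$, and $\g$-invariance forces $C\sigma_t(\nabla(p))=\nabla(p)$; hence $\tilde X|_{CM_p}$ vanishes at $\nabla(p)$, showing $\nabla(p)\in CM_p^{\g_p}$.

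For injectivity, suppose $\nabla,\bar\nabla\in{\rm Cnx}(M)^{\g}$ agree at $p$. The difference $\theta=\bar\nabla-\nabla$ is a $\g$-invariant $1$-form of endomorphisms with $\theta_p=0$. For any $X\in\g$ with flow $\sigma_t$, invariance gives $(\sigma_t)_*\theta=\theta$, so $\theta_{\sigma_t(p)}=0$ whenever defined. Since $\g$ acts transitively on the connected manifold $M$, the orbit of $p$ under the flow pseudogroup of $\g$ is open (by transitivity $\phi_p$ is surjective, so iterated flows of a basis of $\g$ sweep out a neighborhood by the inverse function theorem), and by the same argument the complement of this orbit is open; thus the orbit is all of $M$. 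Hence $\theta\equiv 0$ on $M$ and $\nabla=\bar\nabla$.

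For surjectivity, given $c_p\in CM_p^{\g_p}$ I would define $\nabla(q)$ by choosing any composition of flows $\sigma=\sigma_{t_k}^{X_k}\cdots\sigma_{t_1}^{X_1}$ with $X_i\in\g$ taking $p$ to $q$ and setting $\nabla(q)=(C\sigma)(c_p)$. Well-definedness is the delicate point and is the main obstacle: if $\sigma,\tau$ both send $p$ to $q$, then the formal inverse $\tau^{-1}\sigma$ is a pseudogroup element fixing $p$, and its infinitesimal generator at $p$ lies in $\g_p$; the hypothesis $c_p\in CM_p^{\g_p}$ (i.e.\ $\tilde Y_{c_p}=0$ for all $Y\in\g_p$) ensures $C(\tau^{-1}\sigma)$ fixes $c_p$, so the value at $q$ does not depend on the path. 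Once well-definedness is settled, smoothness follows from smooth dependence of flows on parameters, and $\g$-invariance is built in by construction. This provides the inverse to $\mathrm{ev}_p$ and completes the identification ${\rm Cnx}(M)^{\g}\simeq CM_p^{\g_p}$; the dimension bound $\dim{\rm Cnx}(M)^{\g}\leq (\dim M)^3$ follows at once.
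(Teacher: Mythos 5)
Your strategy is the same as the paper's: the published proof is a one-line reduction to Proposition \ref{pr:inv1}, replacing the group by the flow pseudogroup, which is exactly what you spell out. Your first two steps are sound: the evaluation map lands in $CM_p^{\mathfrak g_p}$, and injectivity follows because the difference of two invariant connections is an invariant $1$-form of endomorphisms vanishing at $p$, hence vanishing on the pseudogroup orbit of $p$, which is all of $M$; this already yields the finite-dimensionality, which is the part the paper actually uses later.

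The gap is precisely at the step you yourself flag as delicate, and your resolution of it does not work. A composition of flows $\tau^{-1}\sigma$ fixing $p$ has no single infinitesimal generator, and even when a pseudogroup element fixing $p$ is the time-one map of the flow of a single $X\in\mathfrak g$, that $X$ need not lie in $\mathfrak g_p$: the flow may move $p$ at intermediate times and only return it at time one. So $c_p\in CM_p^{\mathfrak g_p}$ does not force $C(\tau^{-1}\sigma)(c_p)=c_p$, and the transport can be genuinely ill-defined. Concretely, let $G=\mathbb R\ltimes\mathbb R$ be the (nonabelian, simply connected) affine group of the line, $\delta=\exp(\xi)$ a nontrivial scaling, and $M=G/\langle\delta\rangle$ (a cylinder) with $\mathfrak g\simeq{\rm Lie}(G)$ acting by the descended right-invariant fields: the action is faithful and transitive, $\mathfrak g_p=0$, so $CM_p^{\mathfrak g_p}=CM_p$ has dimension $8$, whereas ${\rm Cnx}(M)^{\mathfrak g}$ is identified with the left-invariant connections on $G$ fixed by ${\rm Ad}(\delta)$, an affine subspace of dimension $3$. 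Here the time-one flow of the fundamental field of $\xi$ fixes $p$ but acts nontrivially on $CM_p$, so transport along different words disagrees and the evaluation map is injective but not surjective. Hence the identification ${\rm Cnx}(M)^{\mathfrak g}\simeq CM_p^{\mathfrak g_p}$ requires controlling the full isotropy of the pseudogroup, not merely $\mathfrak g_p$; it does hold, for instance, for simply connected $M$ or at the level of germs, which is how the statement is used later in the paper. The paper's own one-line proof glosses over the same point, but as written your well-definedness argument is not correct and cannot be repaired without such an additional hypothesis.
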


\begin{proof}
It is a consequence of the same argument we exposed in Proposition \ref{pr:inv1}, but in this case we use the flow pseudogroup of $\mathfrak g$ instead of the Lie group $G$.
\end{proof}

Let us recall that given an action of $G$ in $M$ there is an associated infinitesimal action of ${\rm Lie}(G)$ given by,
$$\phi(A)_p = \left.\frac{d}{dt}\right|_{t=0} \exp(tA)\star p.$$
There is a natural relation between the invariants of an action and that of its associated infinitesimal action. In particular, for invariant connections, we have that if $G$ is a connected Lie group acting on $M$ then ${\rm Cnx}(M)^G$ $=$ ${\rm Cnx}(M)^{{\rm Lie}(G)}$.


\subsection{Lie derivative of a connection}

In this section we develop a method for computing the invariant connections of an infinitesimal action. The well known notion of Lie derivative for tensors can be extended to sections of natural bundles as in \cite{Kolar}. In the particular case of connections, it can be defined by as follows. 

\begin{definition}\label{df:LD_connection}
The Lie derivative of a connection $\nabla$ in the direction of a field $X$ is defined as:
$$
\L_X\nabla = \lim_{t\to 0} \frac{\sigma_{-t*}\nabla - \nabla}{t}, 
$$
where $\{\sigma_t\}_{t\in\mathbb R}$  is the flow of the field $X.$
\end{definition}

Note that for an affine connection $\nabla$ and a vector field $X$ the Lie derivative $\mathcal L_X\nabla\in \Omega^1(M,{\rm End}(TM))$ is not a connection but a $1$-form of endomorphisms.

\begin{proposition}
The Lie derivative of $\nabla$ has the following properties: 
\begin{enumerate}\label{leibniz}
    \item (Linearity) $\L_{fX+gY}\nabla = f(\L_X\nabla) + g(\L_Y\nabla)$.
    \item (Leibniz formula) $(\L_X\nabla)(Y,Z) = [X,\nabla_Y Z] - \nabla_{[X,Y]}Z - \nabla_Y[X,Z]$.
\end{enumerate}
\end{proposition}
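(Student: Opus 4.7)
The plan is to establish (2) directly from Definition~\ref{df:LD_connection}, and then deduce (1) as an immediate consequence of (2) together with the $\mathbb R$-bilinearity of the Lie bracket.

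For (2), I would unpack $(\sigma_{-t*}\nabla)_Y Z$ using the push-forward rule for connections recalled in the section, $(f_*\nabla)_U V = f_*\!\left(\nabla_{f^{-1}_*U}\,f^{-1}_*V\right)$. Applied to $f = \sigma_{-t}$, whose inverse push-forward is $\sigma_{t*}$, this gives
\begin{equation*}
(\sigma_{-t*}\nabla)_Y Z \;=\; \sigma_{-t*}\!\left(\nabla_{\sigma_{t*}Y}\,\sigma_{t*}Z\right).
\end{equation*}
The parameter $t$ appears in three positions on the right, so the key step is to differentiate at $t=0$ by the product rule, using the standard flow identities
\begin{equation*}
\left.\tfrac{d}{dt}\right|_{t=0}\sigma_{-t*}V = [X,V],\qquad \left.\tfrac{d}{dt}\right|_{t=0}\sigma_{t*}V = -[X,V].
\end{equation*}
The outer push-forward contributes $[X,\nabla_Y Z]$, and the two inner push-forwards contribute $-\nabla_{[X,Y]}Z$ and $-\nabla_Y[X,Z]$. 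Summing the three pieces yields exactly the Leibniz formula asserted in (2).

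For (1), once (2) is available, linearity in the direction $X$ is formal: each of the three terms on the right-hand side of the Leibniz formula is $\mathbb R$-linear in $X$ since the Lie bracket is $\mathbb R$-bilinear and each occurrence of $X$ sits inside a bracket. Hence the map $X\mapsto \L_X\nabla$ from $\mathfrak X(M)$ into $\Omega^1(M,{\rm End}(TM))$ is $\mathbb R$-linear, which (read for $f,g$ as scalars) is exactly the identity $\L_{fX+gY}\nabla = f(\L_X\nabla) + g(\L_Y\nabla)$.

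The one mildly delicate point is the triple product rule at $t=0$ for the composition $\sigma_{-t*}\circ\nabla\circ(\sigma_{t*}\times\sigma_{t*})$: one must verify that the derivative splits cleanly into three contributions with the correct signs. Handled pointwise, this reduces to the ordinary chain rule for three one-parameter curves in the relevant finite-dimensional bundles, and it is the only place where care is required; everything else is bracket manipulation and the defining Leibniz rule of $\nabla$.
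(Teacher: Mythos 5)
Your proof of (2) is essentially the paper's own argument: the authors likewise rewrite $(\sigma_{-t*}\nabla)_Y Z=\sigma_{-t*}\bigl(\nabla_{\sigma_{t*}Y}\sigma_{t*}Z\bigr)$ and differentiate the three occurrences of $t$ at $t=0$, implementing your ``triple product rule'' concretely by adding and subtracting intermediate terms (a telescoping sum), so the delicate step you flag is handled there by an elementary rearrangement rather than an appeal to a general chain rule; the three contributions $[X,\nabla_YZ]$, $-\nabla_{[X,Y]}Z$, $-\nabla_Y[X,Z]$ come out exactly as in your sketch. Where you genuinely differ is item (1): the paper merely asserts it ``can be easily checked from the definition,'' whereas you deduce it from (2); since the flow of $fX+gY$ has no simple expression in terms of the flows of $X$ and $Y$, your route is arguably the more honest one. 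One caveat worth recording: your parenthetical ``read for $f,g$ as scalars'' is not optional but essential. With the paper's standing convention that $f,g$ denote smooth functions, statement (1) as literally written is false, because $\mathcal{L}_X\nabla$ depends on second derivatives of $X$: on $\mathbb R$ with the standard flat connection $\nabla^0$ one has $\mathcal{L}_{\partial_x}\nabla^0=0$, yet $(\mathcal{L}_{x^2\partial_x}\nabla^0)(\partial_x,\partial_x)=-\nabla^0_{\partial_x}[x^2\partial_x,\partial_x]=2\,\partial_x\neq x^2\,(\mathcal{L}_{\partial_x}\nabla^0)(\partial_x,\partial_x)$. So only $\mathbb R$-linearity holds, which is what your argument proves and is all the paper ever uses in the sequel.
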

\begin{proof}
(1) can be easily checked from the definition. Let us prove (2). Without lose of generality let us assume that $X$ is complete and $\{\sigma_t\}_{t\in\mathbb R}$ is its flow. 
$$(\L_X\nabla)(Y,Z) = \lim_{t\to 0} \frac{\sigma_{-t*}\nabla - \nabla}{t}(Y,Z)
= \lim_{t\to 0} \frac{(\sigma_{-t*}\nabla)_YZ - \nabla_YZ}{t} = $$
$$ \lim_{t\to 0} \frac{\sigma_{-t*}(\nabla_YZ) - \nabla_YZ}{t} + \lim_{t\to 0} \frac{\sigma_{-t*}(\nabla_{\sigma_{t*}Y}\sigma_{t*}Z) - \sigma_{-t*}(\nabla_YZ)}{t} =$$
$$[X,\nabla_YZ] +  \lim_{t\to 0} \sigma_{-t*} \frac{\nabla_{\sigma_{t*}Y}\sigma_{t*}Z - \nabla_YZ}{t} = $$
$$ [X,\nabla_Y Z]+ \lim_{t\to 0} \frac{\nabla_{\sigma_{t*}Y}\sigma_{t*}Z - \nabla_Y\sigma_{t*}Z}{t} + 
\lim_{t\to 0}\frac{\nabla_Y\sigma_{t*}Z - \nabla_YZ}{t} = $$
$$[X,\nabla_Y Z] + \lim_{t\to 0} \nabla_{\frac{\sigma_{*t}Y-Y}{t}} \sigma_{t*}Z + \lim_{t\to 0} \nabla_Y \frac{\sigma_{t*}Z-Z}{t}=$$
$$[X,\nabla_Y Z] - \nabla_{[X,Y]}Z - \nabla_Y[X,Z].$$
\end{proof}

\begin{lemma}\label{lm:Lie_connection_zero}
Let $X$ be a vector field in $M$
and $\nabla$ be a connection. The following are equivalent:
\begin{enumerate}
    \item $\L_X \nabla = 0$.
    \item $\nabla$ is invariant by the flow of $X$.
\end{enumerate}

\begin{proof}
$(2)\Rightarrow(1)$ is clear from the definition. Let us see $(1)\Rightarrow(2)$.
Without lose of generality let us assume that $X$ is complete, and let $\sigma_t$ be its time $t$ flow. Let us define $\Gamma(t) = \sigma_{t*}(\nabla)-\nabla$. By hypothesis we have $\Gamma(0) = 0$ and $\left.\frac{d}{dt}\right|_{t=0}\Gamma(t)=0$. Moreover, for any $t$, 
$$\frac{d}{dt}\Gamma(t) = 
\lim_{s\to 0} \frac{\Gamma(t+s) - \Gamma(t)}{s} =
\lim_{s\to 0}
\frac{\sigma_{(t+s)*}\nabla - \sigma_{t*}\nabla}{s} = \sigma_{t*}(- \L_X(\nabla)) = 0.$$
therefore, $\frac{d}{dt}\Gamma(t) = 0$ and thus $\Gamma(t) = 0$.
\end{proof}

\end{lemma}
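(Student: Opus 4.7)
The implication $(2)\Rightarrow(1)$ is essentially by definition: if $\sigma_{-t*}\nabla = \nabla$ for all $t$, then the difference quotient defining $\mathcal L_X\nabla$ is identically zero. So the content of the lemma lies in the converse, and my plan is to reduce it to a standard ODE uniqueness argument on the affine space of connections.

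The strategy is to fix a connection $\nabla$ with $\mathcal L_X\nabla=0$, assume (without loss of generality) that $X$ is complete with flow $\{\sigma_t\}$, and consider the curve
\[
\Gamma(t) = \sigma_{t*}\nabla - \nabla \in \Omega^1(M,\mathrm{End}(TM)),
\]
taking values in the model vector space of the affine space of connections. Clearly $\Gamma(0)=0$, and the hypothesis together with the definition of the Lie derivative gives $\Gamma'(0)=0$. The key step is to show that in fact $\Gamma'(t)=0$ for every $t$.

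To do so I would use the semigroup property $\sigma_{t+s}=\sigma_t\circ\sigma_s$ and functoriality of the push-forward, which imply $\sigma_{(t+s)*}\nabla = \sigma_{t*}(\sigma_{s*}\nabla)$. Differentiating in $s$ at $s=0$ and using linearity of the push-forward with respect to the affine structure on $\mathrm{Cnx}(M)$ (the fact that $\sigma_{t*}$ is an affine isomorphism compatible with the vector space structure on differences of connections) gives
\[
\frac{d}{dt}\Gamma(t) \;=\; \lim_{s\to 0}\frac{\sigma_{(t+s)*}\nabla-\sigma_{t*}\nabla}{s} \;=\; -\sigma_{t*}(\mathcal L_X\nabla) \;=\; 0.
\]
Hence $\Gamma$ is constant, and since $\Gamma(0)=0$ we conclude $\sigma_{t*}\nabla=\nabla$ for every $t$, which is precisely the flow invariance of $\nabla$.

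The only subtle point is justifying the interchange between the derivative and the push-forward, i.e., that the computation above actually produces $-\sigma_{t*}(\mathcal L_X\nabla)$ pointwise; this is where one uses that the push-forward acts as a bundle isomorphism on $\Omega^1(M,\mathrm{End}(TM))$ and commutes with limits in the difference quotient. Once that is in place, the argument is purely formal. If $X$ is not complete, the same computation is carried out locally on the maximal flow domain of $X$ around each point, which suffices to conclude invariance of $\nabla$ under the local flow.
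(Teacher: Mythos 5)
Your argument is correct and follows essentially the same route as the paper: both define $\Gamma(t)=\sigma_{t*}\nabla-\nabla$, use the semigroup property of the flow together with the compatibility of push-forward with the affine structure to get $\frac{d}{dt}\Gamma(t)=-\sigma_{t*}(\mathcal L_X\nabla)=0$, and conclude $\Gamma\equiv 0$. Your extra remarks on commuting the push-forward with the limit and on the non-complete case only make explicit points the paper leaves implicit.
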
\label{lm:inv}
\begin{theorem}
Let us consider $\mathfrak g$  a Lie algebra of smooth vector fields in a manifold $M$. The following are equivalent:
\begin{enumerate}
    \item $\nabla$ is $\mathfrak g$-invariant.
    \item $\L_{A}\nabla = 0$, for all $A\in\mathfrak g$.
    \item  $\L_{A_i}\nabla = 0$, for all $i=1,\ldots,n$, where $\{A_1,\ldots,A_n\}$ is a system of generators of  $\mathfrak g$ as Lie algebra.
\end{enumerate}
\end{theorem}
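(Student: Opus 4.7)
The structure of the proof will be the three-way implication $(1)\Rightarrow(2)\Rightarrow(3)\Rightarrow(2)\Rightarrow(1)$, with $(2)\Rightarrow(3)$ being trivial since every generator is an element of $\mathfrak g$.

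For $(1)\Rightarrow(2)$: by definition of $\mathfrak g$-invariance the connection $\nabla$ is invariant under every element of the flow pseudogroup of every $A\in\mathfrak g$. In particular, fixing $A\in\mathfrak g$ and writing $\sigma_t$ for its flow, we have $\sigma_{-t*}\nabla=\nabla$ on the relevant domain, so the difference quotient defining $\L_A\nabla$ is identically zero. Alternatively one may invoke Lemma \ref{lm:Lie_connection_zero} directly.

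For $(2)\Rightarrow(1)$: if $\L_A\nabla=0$ for every $A\in\mathfrak g$, then Lemma \ref{lm:Lie_connection_zero} says $\nabla$ is invariant under the flow of each $A\in\mathfrak g$, which is precisely the defining condition of $\mathfrak g$-invariance.

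The substantive step is $(3)\Rightarrow(2)$. The plan is to show that the subset
\[
\mathfrak h = \{X\in\mathfrak X(M)\mid \L_X\nabla = 0\}
\]
is a Lie subalgebra of $\mathfrak X(M)$; since it contains a generating set $\{A_1,\ldots,A_n\}$ of $\mathfrak g$, it then contains all of $\mathfrak g$. Real linearity of $\mathfrak h$ is immediate from part (1) of Proposition \ref{leibniz} (taking $f,g$ constant). For closure under brackets, I would establish the identity
\[
\L_{[X,Y]}\nabla \;=\; \L_X(\L_Y\nabla) - \L_Y(\L_X\nabla),
\]
which on an affine connection is the standard commutator formula for Lie derivatives on natural bundles. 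This identity can be verified directly from the Leibniz formula in part (2) of Proposition \ref{leibniz} by expanding $(\L_{[X,Y]}\nabla)(U,V)=[[X,Y],\nabla_UV]-\nabla_{[[X,Y],U]}V-\nabla_U[[X,Y],V]$, rewriting each double bracket via the Jacobi identity, and comparing with the expansion of $\L_X(\L_Y\nabla)(U,V)-\L_Y(\L_X\nabla)(U,V)$ obtained by applying the Leibniz formula twice; all non-matching terms cancel. Once this identity is in hand, $\L_X\nabla=\L_Y\nabla=0$ forces $\L_{[X,Y]}\nabla=0$, so $\mathfrak h$ is a Lie subalgebra.

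The main obstacle is the commutator identity $\L_{[X,Y]}\nabla=[\L_X,\L_Y]\nabla$, since the target of $\L_X$ (acting on connections) is a different object (a $1$-form of endomorphisms) from the target of $\L_Y$ (acting on $\L_X\nabla$, which is a tensor). One must therefore interpret $\L_Y(\L_X\nabla)$ as the usual tensorial Lie derivative and carefully verify compatibility; this amounts to a bookkeeping computation but is the only place where anything genuinely needs to be checked.
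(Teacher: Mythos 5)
Your proposal is correct, but the substantive implication is handled by a genuinely different argument than the paper's. The paper derives $(3)\Rightarrow(1)$ dynamically: by Lemma \ref{lm:Lie_connection_zero}, $\nabla$ is preserved by the flows of the generators $A_i$, hence by any composition of these flows, and from this it concludes (somewhat tersely) invariance under the flow of an arbitrary $A\in\mathfrak g$ — i.e.\ it works at the level of the flow pseudogroup generated by the $A_i$. You instead work infinitesimally: you show the annihilator $\mathfrak h=\{X\in\mathfrak X(M)\mid \mathcal{L}_X\nabla=0\}$ is an $\mathbb R$-subspace closed under brackets via the commutator identity $\mathcal{L}_{[X,Y]}\nabla=\mathcal{L}_X(\mathcal{L}_Y\nabla)-\mathcal{L}_Y(\mathcal{L}_X\nabla)$, where the right-hand Lie derivatives are the ordinary tensorial ones on $\Omega^1(M,\mathrm{End}(TM))$, so that $\mathfrak h\supseteq\mathfrak g$ and then Lemma \ref{lm:Lie_connection_zero} gives invariance. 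The identity you flag as the only delicate point does hold and your proposed verification closes: expanding both sides with the Leibniz formula, the six mixed terms cancel in pairs and the remainder reduces to three applications of the Jacobi identity, yielding exactly $[[X,Y],\nabla_UV]-\nabla_{[[X,Y],U]}V-\nabla_U[[X,Y],V]$. What each route buys: the paper's argument is shorter and stays close to the definition of $\mathfrak g$-invariance via the flow pseudogroup, but the step from ``invariant under compositions of generator flows'' to ``invariant under the flow of any $A\in\mathfrak g$'' is left implicit; your commutator-identity argument supplies precisely the algebraic fact that makes that step rigorous, at the cost of one tensor computation. One small caution: the linearity in Proposition \ref{leibniz}(1) as stated with functions $f,g$ is only valid for constants (the Lie derivative of a connection is not $\mathcal C^\infty(M)$-linear in $X$), which is exactly how you use it, so your restriction to constant coefficients is the right reading.
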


\begin{proof}
If $\nabla$ is $\mathfrak g$-invariant, then $\sigma_{-t*}(\nabla) = \nabla$ for the flow of any vector field of the form $\phi(A)$ with $A\in\mathfrak g$. By definition of Lie derivative we have $\L_{A}\nabla = 0$, and $(1)\Rightarrow(2)$. We also have $(2)\Rightarrow (3)$. Finally to prove $(3)\Rightarrow (1)$ it is enough to note that, from Lemma \ref{lm:inv}, $\nabla$ is invariant with respect to the flow of the vector fields $A_i$. Thus, it is invariant with respect to any composition of these flows, and then with respect to the flow of any vector field $A\in\mathfrak g$.
\end{proof}

\begin{lemma}
Let us consider a Lie algebra $\mathfrak g$ of vector fields in an open subset
$U\subseteq \mathbb R^n$. If $\mathfrak g$ contains the translation vector field $\partial_{x_i}$ then for any $\mathfrak g$-invariant connection $\nabla$ the Christoffel symbols $\Gamma_{ij}^k$ are independt from $x_i$.
\end{lemma}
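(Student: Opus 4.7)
The plan is to apply the theorem stated just above (characterizing $\mathfrak{g}$-invariance through the vanishing of Lie derivatives) to the particular vector field $\partial_{x_i}\in\mathfrak{g}$, and then compute $\mathcal{L}_{\partial_{x_i}}\nabla$ explicitly in the coordinate frame $(\partial_{x_1},\ldots,\partial_{x_n})$ using the Leibniz formula from Proposition~\ref{leibniz}.

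First, I would invoke the theorem to reduce $\mathfrak{g}$-invariance to $\mathcal{L}_{\partial_{x_i}}\nabla=0$, which is an identity among $1$-forms of endomorphisms. To test it, it suffices to evaluate it on the coordinate vector fields $\partial_{x_j}$ and $\partial_{x_k}$. The key simplification is that coordinate vector fields commute: $[\partial_{x_i},\partial_{x_j}]=0$ and $[\partial_{x_i},\partial_{x_k}]=0$. Therefore, the Leibniz formula collapses to
\[
(\mathcal{L}_{\partial_{x_i}}\nabla)(\partial_{x_j},\partial_{x_k})=[\partial_{x_i},\nabla_{\partial_{x_j}}\partial_{x_k}].
\]

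Next, I would substitute the definition of the Christoffel symbols, $\nabla_{\partial_{x_j}}\partial_{x_k}=\sum_{l}\Gamma_{jk}^{l}\partial_{x_l}$. Since $[\partial_{x_i},\partial_{x_l}]=0$, the Leibniz rule for Lie brackets with a smooth coefficient gives
\[
[\partial_{x_i},\Gamma_{jk}^{l}\partial_{x_l}]=\partial_{x_i}(\Gamma_{jk}^{l})\,\partial_{x_l}.
\]
Thus $(\mathcal{L}_{\partial_{x_i}}\nabla)(\partial_{x_j},\partial_{x_k})=\sum_{l}\partial_{x_i}(\Gamma_{jk}^{l})\,\partial_{x_l}$, and the vanishing of this expression for all $j,k$ yields $\partial_{x_i}\Gamma_{jk}^{l}=0$ for all $j,k,l$, which is the desired conclusion.

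There is no significant obstacle: the whole argument is a one-line consequence of the Leibniz formula together with the vanishing of the brackets among coordinate vector fields. The only thing worth emphasizing in the write-up is that the coordinate frame is privileged here precisely because its vector fields commute with $\partial_{x_i}$, killing the two bracket terms in Proposition~\ref{leibniz}(2) and leaving only the derivative of the Christoffel coefficients.
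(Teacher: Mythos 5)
Your proposal is correct and follows essentially the same route as the paper: both apply the Leibniz formula to the coordinate fields, use $[\partial_{x_i},\partial_{x_j}]=[\partial_{x_i},\partial_{x_k}]=0$ to reduce $\mathcal{L}_{\partial_{x_i}}\nabla$ evaluated on $(\partial_{x_j},\partial_{x_k})$ to the single bracket $[\partial_{x_i},\nabla_{\partial_{x_j}}\partial_{x_k}]$, and conclude $\partial_{x_i}\Gamma_{jk}^l=0$. Your write-up is in fact slightly more complete, since you spell out the expansion $\nabla_{\partial_{x_j}}\partial_{x_k}=\sum_l\Gamma_{jk}^l\partial_{x_l}$ and treat general $n$ rather than restricting indices to $\{1,2\}$ as the paper does.
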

\begin{proof}
Using the Leibniz formula of \ref{leibniz} we have 
$$
\left(\L_{\dxi} \nabla \right)_{\dxj} {\dxk} = \left[\dxi,\nabla_{\dxj}\dxk\right] - \nabla_{\left[\dxi,\dxj\right]}\dxk - \nabla_{\dxj}\left[\dxi,\dxk\right],
$$
\noindent for $i,j, k \in \{1,2\}$. As $\left[\dxi,\dxj\right]=0$, for any $i, j \in \{1,2\}$, the previous equality is reduced to 
$$
\left(\L_{\dxi} \nabla \right)_{\dxj} {\dxk} = \left[\dxi,\nabla_{\dxj}\dxk\right]. 
$$
If $\left[\dxi,\nabla_{\dxj}\dxk\right]=0$ then 
$\dxi \Gamma_{jk}^l = 0$, for $i,j,k,l \in \{1,2\}$.
\end{proof}

\begin{remark}\label{constante}
If the  Lie algebra contains all the translations  $\dxi$, then the Christoffel symbols are constants.
Moreoever if all of them are null, the only invariant connection is the usual one.
\end{remark}

\subsection{Classification of finite dimensional Lie algebra actions on germs of surfaces}

The local classification of infinitesimal actions of finite dimensional Lie algebras in manifolds of complex dimension $2$ and $3$ was given by S. Lie in the XIX century. The real classification in dimension $2$ was completed in \cite{Olver} and  \cite{komrakov}. 

In Tables \ref{tabla:acciones primitivas}, \ref{tabla:acciones no transitivas} and \ref{tabla:imprimitive actions} we reproduce\footnote{ 
In Tables \ref{tabla:acciones primitivas},  \ref{tabla:acciones no transitivas} and \ref{tabla:imprimitive actions} we respect the numeration and cases of \cite{Olver}. However, since we diferenciate the transitive and non-transitive cases, some cases are listed in different order. This explains the gaps in the tables.
}
the results of \cite{Olver}: the local classification of  faithful actions of Lie algebras of finite dimension over opens of the real plane. 

\begin{table}[ht]
\centering
\begin{tabular}{p{0.1cm} p{9cm} p{2.5cm}}
\hline
& Generators  & Structure   \\
\hline 
1.& $\{\dx,\dy,\alpha(x\dx+y\dy)+y\dx-x\dy\}$, $\alpha \geqslant 0 $ &  $\r \ltimes \r^2$ \\
2.& $\{\dx,x\dx+y\dy,\left(x^2-y^2\right)\dx+2xy\dy\}$ & $\mathfrak{sl}(2)$ \\
3.& $\{y\dx-x\dy, (1+x^2-y^2)\dx+2xy\dy,2xy\dx+(1+y^2-x^2)\dy\}$ & $\mathfrak{so}(3)$ \\
4.& $\{\dx,\dy,x\dx+y\dy,y\dx-x\dy\}$ & $\r^2 \ltimes \r^2 $ \\
5.& $\{\dx, \dy,x\dx-y\dy, y\dx, x\dy \}$& $\mathfrak{sl}(2)\ltimes \r^2$  \\
6.& $\{\dx, \dy, x\dx, y\dy, x\dy,y\dy \}$& $\mathfrak{gl}(2)\ltimes \r^2$ \\
7.& $\{\dx, \dy, x\dx+y\dy, y\dx-x\dy, \left(x^2-y^2\right)\dx+2xy\dy, 2xy\dx+(y^2-x^2)\dy \}$& $\mathfrak{sl}(3,1)$ \\
8.& $\{\dx, \dy, x\dx, x\dy, y\dx, y\dy, x^2\dx+xy\dy, xy\dx+y^2\dy\}$& $\mathfrak{sl}(3)$ \\
\hline
\end{tabular}
\medskip 
\caption{Local classification of faithful primitive actions of finite dimensional Lie algebras on the real plane \cite{Olver}.}
\label{tabla:acciones primitivas}
\end{table}

Table \ref{tabla:acciones primitivas} contains the classification of primitive actions. Let us recall that a Lie algebra action in a surface $M$ is primitive if the induced action in the projective bundle of directions in $M$, $\mathbb{P}(T_pM)$ has no fixed points. This is equivalent to say that the action is not by infinitesimal symmetries of a foliation in $M$. In physical terms, it also means that the we consider an \emph{isotropic} geometry in the surface.  Therefore, the cases 1-8 correspond to classical 2-dimensional geometries, namely:
\begin{enumerate}
\item It depends on the parameter $\alpha$. The case $\alpha=0$ corresponds to the euclidean geometry. The case $\alpha\neq 0$ correspond to a primitive subgroup of the affine transformations of the complex plane spanned by translations and the exponential of a complex number of norm different from $1$.
\item Hyperbolic transformations of the half plane.
\item  Rotations of the sphere.
\item  Affine transformations of the complex plane.
\item  Volume preserving affine  transformations of the real plane.
\item  Affine transformations of the real plane.
\item  Conformal transformations of the Riemann sphere.
\item  Projective transformations of the real plane.
\end{enumerate}

\begin{table}[ht]
\centering
\begin{tabular}{p{0.1cm} p{9cm} p{2.5cm}}
\hline
& Generators   & Structure  \\
\hline 
9. & $\{\dx\}$ & $\r$\\
10.& $\{\dx, x\dx \}$ & $\mathfrak{h}_2$\\
11. & $\{\dx, x\dx, x^2\dx\}$& $\mathfrak{sl}(2)$\\
20. & $\{\dy, \xi _1(x)\dy,\cdots, \xi _r(x)\dy\}, r\geq 1$& $\r^{r+1}$\\
21. & $\{\dy, y\dy, \xi_1(x)\dy,\cdots, \xi_r(x)\dy\}$, con $r\geq 1$ & $\r\ltimes \r^{r+1}$\\
\hline
\end{tabular}
\medskip 
\caption{Local classification of non-transitive faithful actions of finite dimensional Lie algebras in the real plane \cite{Olver}.
Functions $\xi_j$ are linearly independent. }
\label{tabla:acciones no transitivas}
\end{table}
 
\begin{table}[ht]
\centering
\begin{tabular}{p{0.1cm} p{9.3cm} p{2.5cm}}
\hline
& Generators   & Structure  \\
\hline 
12. & $\{\dx, \dy,x\dx+\alpha y\dy\}$& $\r \ltimes \r^2$\\
13. & $\{\dx,\dy, x\dx, y\dy \}$& $\mathfrak{h}_2\oplus \mathfrak{h}_2$ \\
14. & $\{\dx, \dy, x\dx, x^2\dx\}$& $\mathfrak{gl}(2)$\\
15. & $\{\dx,\dy, x\dx, y\dy, x^2\dx, y^2\dy\}$& $\mathfrak{sl}(2)\oplus \mathfrak{sl}(2)$\\
16. & $\{\dx,\dy, x\dx, y\dy, x^2\dx,\}$&$\mathfrak{sl}(2)\oplus \mathfrak{h}_2$ \\
17. & $\{\dx+\dy,  x\dx+y\dy, x^2\dx+y^2\dy  \}$& $\mathfrak{sl}(2)$\\
18. & $\{\dx, 2x\dx+y\dy, x^2\dx+xy\dy\}$& $\mathfrak{sl}(2)$ \\
19. & $\{\dx, x\dx, y\dy, x^2\dx+xy\dy\}$& $\mathfrak{gl}(2)$\\
22. & $\{\dx, \eta_1(x)\dy, \cdots, \eta_r(x)\dy \}, r\geq 1$& $\r\ltimes \r^{r} $\\
23. & $\{\dx, y\dy, \eta_1(x)\dy, \cdots, \eta_r(x)\dy   \}, r\geq 1$& $\r^2\ltimes \r^r$\\
24. & $\{\dx, \dy, x\dx+\alpha y\dy, x\dy, \cdots, x^r\dy\}, r\geq 1$
& $\mathfrak{h}_2\ltimes \r^{r+1}$\\
25. & $\{\dx, \dy, x\dy, \cdots, x^{r-1}\dy, x\dx+\left(ry+x^r\right)\dy \}, r\geq 1$& $\r \ltimes(\r\ltimes \r^r)$\\
26. & $\{\dx, \dy, x\dx, x\dy, y\dy, x^2\dy, \cdots, x^r\dy \}, r\geq 1$ & $(\mathfrak{h}_2\oplus \r)\ltimes \r^{r+1}$\\
27. & $\{\dx, \dy, 2x\dx+ry\dy, x\dy, x^2\dx+rxy\dy, x^2\dy, \cdots, x^r\dy\}, r\geq 1$& $\mathfrak{sl}(2)\ltimes \r^{r+1}$\\
28. & $\{\dx, \dy, x\dx, x\dy, y\dy, x^2\dx+rxy\dy, x^2\dy,\cdots, x^r\dy\}, r\geq 1$& $\mathfrak{gl}(2)\ltimes \r^{r+1}$\\
\hline
\end{tabular}
\medskip 
\caption{Local classification of faithful transitive actions of finite dimensional Lie algebras on the real plane \cite{Olver}. Functions  $\eta_j$ are a base of solutions of a lineal differential equation of order $r$, with constant coefficients.
}
\label{tabla:imprimitive actions}
\end{table}

Table \ref{tabla:acciones no transitivas} contains the local classification of non transitive actions. They are, by necessity, imprimitive. Here we can find the classical one-dimensional goeometries: euclidean (case 9), affine (case 10) and projective (case 11). In such cases Proposition \ref{prop:finitedim_inf} does not apply and we will obtain infinite dimensional spaces of invariant connections. Finally, Table \ref{tabla:imprimitive actions} contains the local classification of transitive imprimitive actions.

Finally we will compute the spaces of invariant connections for lie algebra actions on connected surfaces.

\section{Invariant connections for Lie algebra actions on connected surfaces}

\subsection{Primitive Lie algebra actions}

For this classical cases, the existence of invariant connections is well known. By application of Lemma \ref{lm:inv}, we can also recover the following result.

\begin{theorem}\label{th:primitive}
Let $M$ be a connected surface endowed with faithful primitive Lie algebra action of a finite dimensional Lie algebra $\mathfrak g$ there is either one or none $\mathfrak g$-invariant connection. Moreover, only one of the following cases hold:
\begin{itemize}
\item[(a)] $\mathfrak g$ is isomorphic to a Lie sub algebra of the Lie algebra of infinitesimal affine transformations of the plane. There is one $\mathfrak g$-invariant connection; it is flat and torsion free.
\item[(b)] $\mathfrak g$ is isomorphic to the Lie algebra of infinitesimal isometries of a surface of non vanishing constant curvature. There is one $\mathfrak g$-invariant connection; it is torsion free and of constant curvature.
\item[(c)] $\mathfrak g$ is isomorphic to either the Lie algebra of infinitesimal conformal transformations of the Riemann sphere or the Lie algebra of infinitesimal projective transformations of the real plane. There are no $\mathfrak g$-invariant connections.
\end{itemize}
\end{theorem}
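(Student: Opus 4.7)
My plan is to prove Theorem \ref{th:primitive} by case-by-case analysis of the eight primitive actions listed in Table \ref{tabla:acciones primitivas}, in each case using Lemma \ref{lm:Lie_connection_zero} to reduce $\mathfrak{g}$-invariance of a candidate connection to the system $\mathcal{L}_{A_i}\nabla = 0$ for a set of generators, and unfolding each equation via the Leibniz formula of Proposition \ref{leibniz}(2) into explicit linear conditions on the Christoffel symbols $\Gamma_{ij}^k$.

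In cases 1, 4, 5, 6, 7 and 8 both translations $\partial_x$ and $\partial_y$ belong to $\mathfrak{g}$, so Remark \ref{constante} forces the $\Gamma_{ij}^k$ to be constants and the problem becomes finite-dimensional linear algebra on the eight unknowns. Cases 4, 5, 6, 7 and 8 all contain the Euler field $E = x\partial_x + y\partial_y$: applying Leibniz with $[E,\partial_i] = -\partial_i$ collapses the system to $\Gamma_{ij}^k = 0$, so the only candidate invariant connection is the standard flat connection $\nabla^s$. For cases 4, 5 and 6 every remaining generator is linear in $x,y$, and a short computation based on the identity $(\mathcal{L}_X\nabla^s)(\partial_i,\partial_j) = \partial_i\partial_j X^k\,\partial_k$ (valid whenever $\nabla^s$ is the flat frame connection) immediately gives $\mathcal{L}_A\nabla^s = 0$; this yields conclusion (a). The same identity shows that in cases 7 and 8 the extra quadratic generator (e.g.\ $x^2\partial_x+xy\partial_y$) has nonzero second derivatives and hence $\mathcal{L}_X\nabla^s\ne 0$, so no invariant connection exists, giving conclusion (c). Case 1 is handled similarly: its only non-translation generator $\alpha E + (y\partial_x - x\partial_y)$ acts on the eight constants with no zero eigenvalue in the relevant representation on $T^\ast\otimes T^\ast\otimes T$, so again $\Gamma = 0$ is forced and $\nabla^s$ is the unique invariant connection, conclusion (a).

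Cases 2 and 3 lack a full translation subalgebra, and Remark \ref{constante} no longer applies. For case 2 the vector field $\partial_x\in\mathfrak{g}$ forces $\Gamma_{ij}^k = \Gamma_{ij}^k(y)$, after which the generators $x\partial_x + y\partial_y$ and $(x^2-y^2)\partial_x + 2xy\partial_y$ impose a system of ODEs in $y$ whose solution space is one-dimensional; the resulting connection is the Levi-Civita connection of the hyperbolic metric $y^{-2}(dx^2+dy^2)$. For case 3 one works in a coordinate chart on $S^2$ and solves the analogous ODE system to obtain the Levi-Civita connection of the round metric. Both connections are torsion-free with non-vanishing constant curvature, giving conclusion (b). The main obstacle lies here: unlike the other six cases the problem does not reduce to linear algebra over $\mathbb{R}$, so one must integrate the resulting ODEs and identify the unique solution with a known classical connection; everything else is routine symbolic computation with the Leibniz formula.
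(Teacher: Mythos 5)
Your overall strategy --- running through the eight primitive cases of Table \ref{tabla:acciones primitivas} and imposing $\mathcal{L}_{A}\nabla=0$ generator by generator via the Leibniz formula, with Remark \ref{constante} handling the translation-containing cases --- is exactly the method of the paper (whose printed proof in fact only carries out case (1) explicitly), and most of your case analysis is sound. But two concrete points need repair. First, case 5 does \emph{not} contain the Euler field $x\partial_x+y\partial_y$: its linear part is the traceless algebra $\langle x\partial_x-y\partial_y,\;y\partial_x,\;x\partial_y\rangle$, so your collapse argument via $[E,\partial_i]=-\partial_i$ does not apply there as written. The conclusion survives, because with constant Christoffel symbols invariance under $x\partial_x-y\partial_y$ alone already forces $\Gamma^k_{ij}=0$ (the induced weights $\epsilon_k-\epsilon_i-\epsilon_j$ take only the values $\pm1,\pm3$, never $0$; equivalently the $\mathfrak{sl}(2)$-module $V^*\otimes V^*\otimes V$ has no trivial summand), but you must supply this substitute argument for case 5. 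Similarly, your case 1 claim of ``no zero eigenvalue'' is exactly what requires the explicit verification the paper performs (the determinant $\alpha^8+12\alpha^6+30\alpha^4+28\alpha^2+9$ is nowhere zero); as stated it is an unproved assertion.

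Second, in case 2 your statement that the ODE system has a ``one-dimensional'' solution space contradicts the theorem you are proving: case (b) asserts \emph{exactly one} invariant connection. Indeed, the stabilizer of $(0,1)$ is generated by $(1+x^2-y^2)\partial_x+2xy\partial_y$, whose linearization is a rotation, and the rotation action on Christoffel values has only odd weights, so the set of invariant connections is a single point (the hyperbolic Levi--Civita connection), not a line. Concretely: $\partial_x$ gives $\Gamma^k_{ij}=\Gamma^k_{ij}(y)$, the Euler field gives $\Gamma^k_{ij}=c^k_{ij}/y$, and the eight (inhomogeneous) conditions coming from $(x^2-y^2)\partial_x+2xy\partial_y$ determine all eight constants uniquely; if your computation produced a one-parameter family, an equation was dropped. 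Relatedly, in case 3 no translation lies in the algebra, so the invariance equations do not reduce to ODEs in one variable as you suggest; you must either solve the full PDE system in the chart or argue through the $\mathfrak{so}(3)$ isotropy representation (again no zero weights) that the round Levi--Civita connection is the unique candidate and then check its invariance. With these repairs the proposal does establish the theorem, and in fact documents the cases the paper leaves to the reader.
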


\begin{proof}
As in the case (1), $\dx$ and $\dy$ are in $\g$ then $\Gamma_{i,j}^k$ are constants. If we consider $X=\alpha(x\dx+y\dy)+y\dx-x\dy$. Since $\left(L_X \nabla \right)_{\dxi}\dxj=0$  we have the following equations: 
\begin{multicols}{2}

\begin{align*}
\ \alpha \gu^1-\gu^2-\gud^1-\gdu^1&=0\\ 
\alpha \gu^2+\gu^1-\gud^2-\gdu^2&=0
\end{align*}

\begin{align*}
\alpha \gud^1-\gud^2-\gd^1+\gu^1&=0 \\
\alpha\gud^2+\gud^1-\gd^2+\gu^2&=0
\end{align*}

\begin{align*}
\alpha \gdu^1-\gdu^2+\gu^1-\gd^1&=0 \\
\alpha \gdu^2+\gdu^1+\gu^2-\gd^2&=0
\end{align*}

\begin{align*}
\alpha \gd^1-\gd^2+\gud^1+\gdu^1&=0\\
\alpha \gd^2+\gd^1+\gud^2+\gdu^2&=0
\end{align*}
\end{multicols}

We can represent these systems in the following matrix array: \\
\begin{equation*}
\left[\begin{array}{rrrrrrrr}
\alpha &-1&-1&0&-1&0&0&0 \\
1 & \alpha & 0 & -1 & 0 & -1 & 0 &0\\
1&0&\alpha&-1&0&0&-1&0\\
0&1&1&\alpha&0&0&0&-1\\
1&0&0&0&\alpha&-1&-1&0 \\
0&1&0&0&1&\alpha&0&-1\\
0&0&1&0&1&0&\alpha&-1 \\
0&0&0&1&0&1&1&\alpha
\end{array}\right]
 \begin{bmatrix}
 \gu^1\\
 \gu^2\\
\gud^1\\
 \gud^2\\
 \gdu^1\\
 \gdu^2\\
 \gd^1\\
 \gd^2
 \end{bmatrix}=
 \begin{bmatrix}
 0\\0\\0\\0\\0\\0\\0\\0
 \end{bmatrix}
\end{equation*}
The determinant of the matrix is given by the polynomial $\alpha^8 + 12 \alpha^6 + 30\alpha^4 + 28 \alpha^2+9 $. Since this determinant is non-zero for $\alpha \geq 0$, then the system makes sense when $ \gij=0 $ for all $i, j, l \in \{1,2\}. $

\end{proof}

\subsection{Non-transitive Lie algebra actions on germs of surfaces}

\begin{table}[ht]
\begin{tabular}{|p{2.5cm}|p{9.3cm}|}
\hline 
Case & Christoffel symbols
\\
\hline \hline
9 & $\gij$ are functions of $y.$
\\
\hline
10& $\gij=0$ except $\gd^2, \gdu^1, \gud^1$  that are functions of y $y.$
\\
\hline
20, with $r=1$&
{
\begin{flushleft}
\begin{align*}
&\gd^1=\gdu^1=\gud^1=\gd^2=0,\\
&\gud^2+\gdu^2-\gu^1=-\frac{\partial_x^2 \xi_k}{\partial_x \xi_k},
\end{align*}
\end{flushleft}
}
with $\gu^2, \gud^2, \gdu^2$ y $\gu^1$ functions of $x.$
\\
\hline
21, with $r=1$&
{\begin{align*}
&\gd^1=\gdu^1=\gud^1=\gd^2=\gu^2=0,\\
&\gud^2+\gdu^2-\gu^1=-\frac{\partial_x^2 \xi_k}{\partial_x \xi_k},
\end{align*}
}
with $\gud^2, \gdu^2$ y $\gu^1$ functions of  $x.$
\\
\hline
\end{tabular}
\caption{Christofell symbols for invariant connections of non-transitive Lie algebra actions on surfaces in canonical coordinates.}\label{tabla:Christofell_nontransitive}
\end{table}

For non-transitive Lie algebra actions we have the following results.

\begin{proposition}\label{pr:no_transitive1}
Let us consider a non-transitive Lie algebra action of $\mathfrak g\simeq \r^{r+1}$ on an open subset $M\subseteq \mathbb R^2$ to case (20) in Table \ref{tabla:acciones no transitivas}. If ${\rm Cnx}(M)^{\mathfrak g}\neq \emptyset$ then the dimension of $\mathfrak g$ is equal to 2 (r=1). In such case, Christoffel symbols of $\mathfrak g$-invariant connections are characterized by the equations:
{\begin{align*}
&\gd^1=\gdu^1=\gud^1=\gd^2=0\\
&\gud^2+\gdu^2-\gu^1=-\frac{\partial_x^2 \xi}{\partial_x \xi},
\end{align*}
}
with $\gu^2, \gud^2, \gdu^2$ y $\gu^1$ functions of $x.$
\end{proposition}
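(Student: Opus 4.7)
The plan is to apply Lemma \ref{lm:inv} together with the Leibniz formula from Proposition \ref{leibniz}, exploiting the particular form of the generators $\partial_y$ and $\xi_k(x)\partial_y$, $k=1,\ldots,r$.

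First, the presence of the translation $\partial_y$ in $\mathfrak g$, together with the lemma preceding Remark \ref{constante}, forces every Christoffel symbol $\Gamma_{ij}^\ell$ to be independent of $y$, so all of them are functions of $x$ alone. This already gives part of the conclusion.

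Next, I would impose $(\mathcal{L}_{\xi_k\partial_y}\nabla)_{\partial_i}\partial_j=0$ via the Leibniz formula, using $[\xi_k\partial_y,\partial_y]=0$ and $[\xi_k\partial_y,\partial_x]=-\xi_k'(x)\partial_y$. The choice $(i,j)=(1,1)$ produces, after collecting the $\partial_x$ and $\partial_y$ components, the two scalar relations
$$\xi_k'\,(\Gamma_{12}^1+\Gamma_{21}^1)=0,\qquad \xi_k'\,(\Gamma_{12}^2+\Gamma_{21}^2-\Gamma_{11}^1)+\xi_k''=0,$$
while the choices $(i,j)=(1,2),\,(2,1),\,(2,2)$ yield a small linear system whose only solution forces $\Gamma_{22}^1=\Gamma_{22}^2=\Gamma_{12}^1=\Gamma_{21}^1=0$. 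Shrinking the open subset if necessary so that $\xi_k'$ does not vanish identically, the surviving nontrivial relation is
$$\Gamma_{12}^2+\Gamma_{21}^2-\Gamma_{11}^1 \;=\; -\frac{\xi_k''}{\xi_k'},\qquad k=1,\ldots,r.$$

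The decisive step is the dimension count that follows. Since the left hand side is independent of $k$, one gets $\xi_k''/\xi_k'=\xi_1''/\xi_1'$ for every $k$, so $\xi_k'=c_k\,\xi_1'$ for nonzero constants $c_k$, and integrating, $\xi_k-c_k\,\xi_1\equiv d_k\in\mathbb R$. This contradicts the linear independence of $\{1,\xi_1,\ldots,\xi_r\}$ assumed in Table \ref{tabla:acciones no transitivas} unless $r=1$. The main technical point is precisely this last step: one has to exploit that $\partial_y$ itself contributes the constant function $1$ to the function space spanned by the coefficients, so that any affine relation $\xi_k-c_k\xi_1\in\mathbb R$ with $k\neq 1$ is forbidden. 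Once $r=1$, the remaining relation together with the vanishing results reproduces exactly the statement of the proposition, with $\Gamma_{11}^2,\Gamma_{12}^2,\Gamma_{21}^2,\Gamma_{11}^1$ arbitrary functions of $x$ subject only to the single equation displayed.
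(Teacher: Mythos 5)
Your proposal is correct and follows essentially the same route as the paper: $\partial_y$-invariance makes all Christoffel symbols functions of $x$, the Leibniz formula applied to each $\xi_k(x)\partial_y$ yields $\Gamma_{22}^1=\Gamma_{22}^2=\Gamma_{12}^1=\Gamma_{21}^1=0$ together with $\Gamma_{12}^2+\Gamma_{21}^2-\Gamma_{11}^1=-\xi_k''/\xi_k'$, and the $k$-independence of the left-hand side forces $\xi_l=a\,\xi_k+b$, which contradicts the linear independence of $1,\xi_1,\ldots,\xi_r$ unless $r=1$ --- exactly the paper's argument. One small bookkeeping remark: the vanishing of $\Gamma_{12}^1,\Gamma_{21}^1,\Gamma_{22}^2$ also requires the relation $\Gamma_{12}^1+\Gamma_{21}^1=0$ coming from the $\partial_x$-component of the $(1,1)$ equation (not only the $(1,2),(2,1),(2,2)$ components), but since you derived that relation the combined system does give the stated conclusion.
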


\begin{proof}
Let us consider $\mathfrak g = \langle \partial_y, \xi_1(x)\partial_y,\ldots, \xi_r(x)\partial_y \rangle$. By taking components of $$\na{\xi_k(x)\dy}{\dxi}{\dxj} $$ 
we obtain the following system:
\begin{align*}
\left(\gud^2+\gdu^2-\gu^1\right)\dx \xi_k(x)+\dx^2\xi_k(x)&=0,
\\
\left(\gdu^1-\gd^2 \right)\dx \xi_k(x)&=0,
\\
\left(\gud^1+\gdu^1\right)\dx \xi_k(x)&=0,
\\
\left(\gd^2-\gud^1 \right)\dx \xi_k(x)&=0,
\\
\gd^1\dx \xi_k(x)&=0,
\\
\gd^2\dx \xi_k(x)&=0.
\end{align*}

This system is  compatible if for $k\neq l$ we have 
$$\frac{\partial_x^2 \xi_k}{\partial_x \xi_k} = 
\frac{\partial_x^2 \xi_l}{\partial_x \xi_l}.$$
So that functions $\xi_k(x)$ and $\xi_l(x)$ are related by an affine transformation $\xi_l(x) = a\xi_k(x) + b$. Therefore, by taking $\xi = \xi_1$ we have that $\mathfrak g$ is spanned by $\partial_y$, $\xi(x)\partial_y$ and it corresponds to the case $r=1$. We obtain that the $\Gamma^k_{ij}$ are functions of $x$ satisfying the following relations:
\begin{subequations}\label{21e1}
\begin{align}
&\gd^1=\gdu^1=\gud^1=\gd^2=0,\\
&\gud^2+\gdu^2-\gu^1=-\frac{\partial_x^2 \xi}{\partial_x \xi}.
\end{align}
\end{subequations}

\end{proof}

\begin{proposition}\label{pr:no_transitive2}
Let us consider a non-transitive Lie algebra action of $\mathfrak g \simeq \r\ltimes \r^{r+1}$ on an open subset $M\subseteq\mathbb R^2$ corresponding to case (21) in Table \ref{tabla:acciones no transitivas}. If ${\rm Cnx}(M)^{\mathfrak g}\neq \emptyset$ then the dimension of $\mathfrak g$ is equal to 3 (r=1). In such case, Christoffel symbols of $\mathfrak g$-invariant connections are characterized by the equations:
{\begin{align*}
&\gd^1=\gdu^1=\gud^1=\gd^2=\gu^2=0\\
&\gud^2+\gdu^2-\gu^1=-\frac{\partial_x^2 \xi}{\partial_x \xi}.
\end{align*}
}
with $\gud^2, \gdu^2$ y $\gu^1$ functions of  $x.$
\end{proposition}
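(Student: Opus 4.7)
The plan is to reduce to the previous case and then extract the single additional constraint imposed by the new generator $y\dy$. Since the sub-algebra spanned by $\dy,\xi_1(x)\dy,\ldots,\xi_r(x)\dy$ coincides with the Lie algebra of case (20) in Table \ref{tabla:acciones no transitivas}, any $\mathfrak g$-invariant connection is in particular invariant under this sub-algebra. Proposition \ref{pr:no_transitive1} then applies verbatim: it forces $r=1$, yields that the Christoffel symbols depend only on $x$, gives the vanishings $\gd^1=\gdu^1=\gud^1=\gd^2=0$, and leaves the single relation $\gud^2+\gdu^2-\gu^1=-\partial_x^2\xi/\partial_x\xi$ with $\xi=\xi_1$.

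It remains to impose $\na{y\dy}{\dxi}{\dxj}$ for $i,j\in\{1,2\}$ and read off the consequences. I would use the Leibniz formula of Proposition \ref{leibniz}, together with the elementary brackets $[y\dy,\dx]=0$ and $[y\dy,\dy]=-\dy$, plus the fact that $y\dy(\Gamma_{ij}^k)=0$ because the symbols depend only on $x$. Each of the four components then collapses to a short linear expression: the $(\dy,\dy)$-component becomes $2\gd^1\dx+\gd^2\dy$, which already vanishes by step one; the mixed $(\dx,\dy)$ and $(\dy,\dx)$ components reduce respectively to $\gud^1\dx$ and $\gdu^1\dx$, also already zero; and the $(\dx,\dx)$-component yields $-\gu^2\dy$, so invariance under $y\dy$ forces the sole new relation $\gu^2=0$.

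Combining the two steps delivers exactly the list of relations in the statement and shows that $\gu^1,\gud^2,\gdu^2$ remain free functions of $x$ subject only to the linear constraint $\gud^2+\gdu^2-\gu^1=-\partial_x^2\xi/\partial_x\xi$, while the remaining symbols all vanish. The main technical effort is the careful bookkeeping in the Lie-derivative calculation for each of the four index pairs; however, since the $x$-dependence of the Christoffel symbols has already been established when reducing to case (20), every component reduces to a single linear term and no genuine obstacle appears beyond those already handled in Proposition \ref{pr:no_transitive1}.
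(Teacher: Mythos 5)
Your proposal is correct and follows essentially the same route as the paper: reduce to case (20) via the contained subalgebra (forcing $r=1$ and the relations of Proposition \ref{pr:no_transitive1}), then impose $\na{y\dy}{\dxi}{\dxj}$, whose only new consequence is $\gu^2=0$. The component-by-component verification you sketch (using $[y\dy,\dx]=0$, $[y\dy,\dy]=-\dy$ and the $x$-dependence of the symbols) is accurate and merely spells out the computation the paper leaves implicit.
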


\begin{proof}
This algebra contains that of case 20. Therefore, we have that $r=1$ and the system of equations \ref{21e1} are satisfied. There is an additional equation $\na{y\dy}{\dxi}{\dxj}$ yielding
$\gu^2=0.$ We conclude that $\gud^2, \gdu^2$ and $\gu^1$ are functions of $x$ and the following relations are satisfied: 
\begin{subequations}
\begin{align}
&\gd^1=\gdu^1=\gud^1=\gd^2=\gu^2=0,\\
&\gud^2+\gdu^2-\gu^1=-\frac{\partial_x^2 \xi_k}{\partial_x \xi_k}.
\end{align}
\end{subequations}
\end{proof}

Lie algebra actions corresponding to 9, 10, and 11 are well known one dimensional geometries. Case 11 corresponds to the projective geometry of the real line and it does not admit invariant connections. A direct computation yields the space of invariant connections in canonical coordinates.
Summarizing, the following Table \ref{tabla:Christofell_nontransitive}, contains the computation of Christoffel symbols of invariant connections in canonical coordinates.

\subsection{Transitive imprimitive Lie algebra actions}

\begin{theorem}\label{th:imprimitive}
Let us consider a faithful transitive imprimitive action of a Lie algebra $\mathfrak g$ in a connected surface $M$. Then, if the affine space ${\rm Cnx}(M)^{\mathfrak g}$ is not empty, then it falls in one of the following cases:
\begin{enumerate}
\item[(a)] The action corresponds to one of the following cases of Table \ref{tabla:imprimitive actions}:
    \begin{enumerate}
        \item[(i)] Case (12) with $\alpha\neq \frac{1}{2}$,
        \item[(ii)] Case (13),
        \item[(iii)] Case (24) with $r=1$,
        \item[(iv)] Case (25) with $r=1$,
        \item[(v)] Case (26) with $r=1$.
    \end{enumerate}
     For any such cases, the only invariant connection, in canonical coordinates, is the standard affine connection.
\item[(a')] The action corresponds to case (17). There is only an invariant connection whose Christoffel symbols, in canonical coordinates are:
\begin{align*}
\gu^2&=\gud^1=\gud^2=\gdu^1=\gdu^2=\gd^1=0,\\
\gu^1&=-\frac{2}{x-y},\\
\gd^2&=-\frac{2}{y-x}.    
\end{align*}
\item[(b)] The action corresponds to case (12) of Table \ref{tabla:imprimitive actions} with $\alpha =\frac{1}{2}$. The space ${\rm Cnx}(M)^{\mathfrak g}$ has dimension $1$. The equations for the Christoffel symbols in canonical coordinates are:
    $$\Gamma_{22}^1 \,\, \mathrm{cte}, \mbox{ for all other symbols }\Gamma_{ij}^k = 0.$$
\item[(c)] The action corresponds to case (18). The affine space ${\rm Cnx}(M)^{\mathfrak g}$ has dimension $3$. The equations for the Christoffel symbols in canonical coordinates are:
    {\begin{align*}
\gu^1&=\frac{a+b}{y^2},\quad \gu^2=\frac{c}{y^3},\\
\gud^2&=\frac{a}{y^2},\quad \gdu^2=\frac{b}{y^2},\\
\gd^2&=-\frac{2}{y}\quad\gd^1=0\\
\gud^1&=\gdu^1=-\frac{1}{y}.
\end{align*}}
    for arbitrary constants $a,b,c$.
\item[(d)] The action corresponds to case (22) with $r=1$ and $\eta_1(x) = e^{\alpha x}$. The affine space ${\rm Cnx}(M)^{\mathfrak g}$ has dimension $8$. The equations for the Christoffel symbols in canonical coordinates are:
    {\begin{align*}
\gu^2&=c_{22}^1\alpha^3y^3-c_{22}^2\alpha^2 y^2+(c_{11}^1-c_{21}^2-c_{12}^2)\alpha y-\alpha^2 y+c_{11}^2
\\
\gu^1&=c_{22}^1\alpha^2 y^2-(c_{12}^1+c_{21}^1)\alpha y+c_{11}^1
\\
\gud^2&=-c_{22}^1\alpha^2 y^2-(c_{22}^2-c_{12}^1)\alpha y+c_{12}^2
\\
\gdu^2&=-c_{22}^1\alpha^2 y^2-(c_{22}^2+c_{21}^2)\alpha y+c_{21}^2
\\
\gud^1&=-c_{22}^1\alpha y+c_{12}^1
\\
\gdu^1&=-c_{22}^1\alpha y+c_{21}^1
\\
\gd^2&=c_{22}^1\alpha y+c_{22}^2
\\
\gd^1&=c_{22}^1.
\end{align*}} 
    for arbitrary constants $c_{ij}^k$, with $i,j,k \in \{1,2\}$.
\item[(e)] The action corresponds to case (23) with $r=1$ and $\eta_1(x) = e^{\alpha x}$.  The affine space ${\rm Cnx}(M)^{\mathfrak g}$ has dimension $4$. The equations for the Christoffel symbols in canonical coordinates are:
{\begin{align*}
\gd^2&=\gud^1=\gdu^1=\gd^1=0\\
\gu^1&=a, \gud^2=c, \gdu^2=d  \\
\gu^2&=-\alpha (d+c-a)y-\alpha^2y+b.
\end{align*}
}
for arbitrary constants $a,b,c,d.$
\end{enumerate}
\end{theorem}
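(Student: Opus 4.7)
The plan is to carry out a systematic case-by-case analysis following the enumeration of transitive imprimitive actions in Table \ref{tabla:imprimitive actions}. For each entry we invoke Theorem \ref{lm:inv}: an affine connection $\nabla$ is $\mathfrak g$-invariant if and only if $\L_A\nabla=0$ for each generator $A$ of $\mathfrak g$, and each such equation becomes, via the Leibniz formula of Proposition \ref{leibniz}, a linear system of PDEs in the eight Christoffel symbols $\Gamma_{ij}^k$ of $\nabla$ with respect to the canonical frame $(\dx,\dy)$.

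The first major reduction comes from Remark \ref{constante}: whenever both $\dx$ and $\dy$ belong to $\mathfrak g$ the Christoffel symbols are forced to be constants, so the Lie derivative equations for the remaining generators reduce to finite-dimensional linear systems on $\r^8$. This covers cases (12)--(16), (19) and (24)--(28). For the affine-type generator $x\dx+\alpha y\dy$ of case (12) one expands the Leibniz formula using $[x\dx+\alpha y\dy,\dx]=-\dx$ and $[x\dx+\alpha y\dy,\dy]=-\alpha\dy$ and obtains a diagonal system in which each $\Gamma_{ij}^k$ is multiplied by an exponent of the form $\lambda_i+\lambda_j-\lambda_k$ with $\lambda_1=1,\lambda_2=\alpha$; this kernel is trivial except at the normalized value $\alpha=\tfrac12$, where $\Gamma_{22}^1$ becomes free, giving items (a)(i) and (b). A parallel bracket computation resolves cases (13), (24)--(26) with $r=1$, where every symbol is forced to vanish. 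For the non-existence cases (14), (15), (16), (19), (27), (28) and (24)--(26) with $r\ge 2$ the strategy is to exhibit a single extra generator whose equations alone already imply $\Gamma_{ij}^k=0$ for all $i,j,k$; typically the quadratic monomial $x^2\dx$ or one of the higher monomials $x^k\dy$ produces strictly more independent relations than free unknowns as soon as $r\ge 2$.

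Cases (22) and (23) contain $\dx$ but not $\dy$, so the translation lemma preceding Remark \ref{constante} forces $\Gamma_{ij}^k=\Gamma_{ij}^k(y)$. Imposing $\L_{\eta_k(x)\dy}\nabla=0$ gives relations that are polynomial in $\eta_k,\eta_k',\eta_k''$, and linear independence of the $\eta_k$ forces the ratio $\eta_k''/\eta_k'$ to be the same for all $k$; up to affine reparametrization this reduces to $r=1$ with $\eta_1(x)=e^{\alpha x}$. Solving the resulting linear system explicitly yields the eight-parameter family of case (d); adding the further generator $y\dy$ (case 23) cuts the dimension to four, producing (e). Case (18) is handled in the same spirit: $\dx$ yields $\Gamma_{ij}^k=\Gamma_{ij}^k(y)$, and the two remaining generators $2x\dx+y\dy$ and $x^2\dx+xy\dy$ then produce an overdetermined linear ODE system in $y$ whose general solution is the three-parameter family of case (c). Finally, case (17) is the unique transitive case containing no coordinate translation. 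Invariance under $\dx+\dy$ implies that the $\Gamma_{ij}^k$ depend only on $u=x-y$; invariance under $x\dx+y\dy$ enforces a homogeneity condition in $u$; and invariance under $x^2\dx+y^2\dy$ closes the system, forcing the explicit connection of case (a').

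The main obstacle is not conceptual but organizational: roughly twenty distinct linear systems must be extracted, solved and compared, several of them parameterized by the continuous parameter $\alpha$ or the integer $r$, with the answer sometimes depending discontinuously on those parameters (as in (12) at $\alpha=\tfrac12$). The cleanest way to proceed is to treat the items (a)--(e) in the order in which the Christoffel symbols are most constrained by translations, so that each system is already close to triangular before the nontrivial generators are imposed; this also cleanly isolates the non-existence subcases, which are precisely those where the underlying linear map has full rank $8$.
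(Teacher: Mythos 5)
Your overall method coincides with the paper's: impose $\L_A\nabla=0$ generator by generator via the Leibniz formula, use the translations in $\mathfrak g$ to make the Christoffel symbols constant (or functions of a single variable), and solve the resulting systems case by case; the paper merely economizes in item (a) by a chain of subalgebra inclusions inside $\mathfrak g_6$, so that only cases (12) with $\alpha\neq 1/2$ and (25) with $r=1$ are computed directly, while your fully direct route is equivalent. Your spectral description of case (12) (coefficients $\lambda_i+\lambda_j-\lambda_k$, with only $\Gamma_{22}^1$ freed at $\alpha=\tfrac12$ on the normalized range $0<|\alpha|\leq 1$), the reduction of (22)--(23) to $r=1$ and $\eta_1(x)=e^{\alpha x}$, and the treatments of (17) and (18) all match the paper's computations. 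However, there is a genuine flaw in how you propose to handle the excluded cases (14), (15), (16), (19), (27), (28) and (24)--(26) with $r\geq 2$: you say the strategy is to exhibit a generator whose equations ``already imply $\Gamma_{ij}^k=0$'', backed by a count of relations versus unknowns. That cannot prove non-existence: if the equations merely forced all symbols to vanish, the standard flat connection would be invariant and these cases would fall under item (a), contradicting the statement. The invariance equations are affine, not linear, in the $\Gamma_{ij}^k$ --- the second derivatives of the coefficients of the generators contribute inhomogeneous terms --- and non-existence must be established by an explicit inconsistency. For instance, in case (14) the component $\left(\L_{x^2\dx}\nabla\right)_{\dx}\dx$ contains the term $2\dx$, incompatible with the constancy and vanishing already forced by $\dx$, $\dy$, $x\dx$; in case (19) one obtains $xy\,\dy\gd^2+y\gd^2+1=0$; and in (27), (28) and in (24)--(26) with $r\geq 2$ the monomials $x^2\dx$, $x^2\dy$ produce analogous contradictions. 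A rank argument on the homogeneous part cannot distinguish ``exactly the standard connection'' from ``no invariant connection'', and that distinction is precisely what the classification asserts.

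A second, smaller error: you list case (19) among the algebras containing both translations, but $\mathfrak g_{19}=\{\dx,\,x\dx,\,y\dy,\,x^2\dx+xy\dy\}$ contains no $\dy$, so Remark \ref{constante} does not apply there. The symbols are a priori functions of $y$ only (from $\dx$), and the non-existence argument must be run in that setting, combining the scaling relations coming from $x\dx$ and $y\dy$ with the inhomogeneous equations coming from $x^2\dx+xy\dy$, as the paper does. With these two repairs --- deriving the contradictions from the inhomogeneous terms rather than from rank counting, and treating (19) with $y$-dependent symbols --- your outline reproduces the paper's proof.
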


\begin{proof}
The proof is based in an analysis of the Lie derivative of a general connections by the generators of the Lie algebra action in canonical coordinates for each case appearing in Table \ref{tabla:imprimitive actions}.

\begin{enumerate}
\item [(a)] Let  $\mathfrak g_{6}$, $\mathfrak g_{12,\alpha\neq 1/2}$, $\mathfrak g_{13}$, $\mathfrak g_{24,r=1}$, $\mathfrak g_{25,r=1}$, $\mathfrak g_{26,r=1}$ be Lie algebras correspond to their corresponding cases in the tables  \ref{tabla:acciones primitivas}, \ref{tabla:imprimitive actions}. By the last result, the only invariant connection for the algebra $\mathfrak g_{6}$ is the affine standard connection. As the other algebras are contained in this algebra forming the lattice:
$$\xymatrix{
\mathfrak{g}_6 \\ \mathfrak g_{26,r=1} \ar@{^{(}->}[u] \\ \mathfrak g_{24,r=1} \ar@{^{(}->}[u] \\ \mathfrak g_{13} \ar@{^{(}->}[u] & \ \ \ \ \ \ \ \  \mathfrak g_{25,r=1} \ar@{^{(}->}[ul] \\  \mathfrak g_{12,\alpha\neq 1/2} \ar@{^{(}->}[u]
}$$
then is sufficient to show the result for the cases
 $12$ with  $\alpha\neq 1/2$, $25$ with $r=1$.
 
\begin{itemize}

\item Case (12), $\g=\left \{\dx, \dy,x\dx+\alpha y\dy\right \},$ $0<|\alpha|\leq 1.$

The coefficients $\gij$ are constants by \ref{constante}. If $X=x\dx+\alpha y\dy$, with $\alpha \neq \frac{1}{2}$, then from that $\na{X}{\dxi}{\dxj}$ it follows  $\gij=0$.

\item Case (25), $r=1$. in this case the algebra is $\g=\{\dx, \dy, x\dx+\left(y+x\right)\dy \}$. As $\dx$ and $\dy$ are in $ \g$, then $\gij$ are constants. Taking the field $Y=\dx+\left(y+x\right)\dy$, from $\na{Y}{\dxi}{\dxj}$, we have the system
\begin{align*}
-\gu^1&+\gu^2+\gud^2+\gdu^2=0,\\
\gu^1&+\gud^1+\gdu^1=0,\\
\gdu^2&-\gdu^1+\gd^2=0,\\
\gud^2&-\gud^1+\gd^2=0,\\
\gud^1&+\gd^1=0,\\
\gdu^1&+\gd^1=0,\\
\gd^2&=\gd^1=0.
\end{align*}
With solution $\gij=0$.

\end{itemize}

\item[(a')]  Case (17), $\g=\left\{\dx+\dy,  x\dx+y\dy, x^2\dx+y^2\dy \right\}$. 
Considering $X=\dx+\dy$, $Y=x\dx+y\dy$ and  $Z=x^2\dx+y^2\dy$, 
from $\na{X}{\dxi}{\dxj}$ and $\na{Y}{\dxi}{\dxj}$, we obtain for $i,j,l\in \{1,2\}$  the system
\begin{align*}
\dx\gij=-\dy\gij,\\
(x-y)\dx\gij+\gij=0,\\
(y-x)\dy\gij+\gij=0.
\end{align*}
Replacing those equations in  $\na{Z}{\dxi}{\dxj}$, we get the system
\begin{align*}
\gu^2&=\gud^1=\gud^2=\gdu^1=\gdu^2=\gd^1=0,\\
\gu^1&=-\frac{2}{x-y},\\
\gd^2&=-\frac{2}{y-x}.    
\end{align*}

\item[(b)] We have that $\gij$ are constants, from $\na{X}{\dxi}{\dxj}$, with $X=x\dx+\alpha y\dy$, we have the result.

\item[(c)] Case (12), $\g=\{\dx, 2x\dx+y\dy, x^2\dx+xy\dy\}$. As $\dx$ is in the algebra, then all the symbols  $\gij$ are functions of  $y$. Taking  $Y=2x\dx+y\dy$ and $Z=x^2\dx+xy\dy$,
from $\na{Y}{\dxi}{\dxj}$ and  $\na{Z}{\dxi}{\dxj}$, we have the system:

\begin{multicols}{2}
\begin{align*}
y\dy\gu^1+2\gu^1&=0,\\
y\dy\gu^2+3\gu^2&=0,\\
y\dy\gud^1+\gud^1&=0,\\
y\dy\gud^2+2\gud^2&=0, \\
y\dy\gdu^1+\gdu^1&=0,\\
y\dy\gdu^2+2\gdu^2&=0
\end{align*}

\begin{align*}
\gud^2+\gdu^2-\gu^1&=0,\\
y\gd^2-y\gud^1+1&=0,\\
y\gd^2-y\gdu^1+1&=0, \\
y\gdu^1+y\gud^1+2&=0,\\
\gd^1&=0.
\end{align*}
\end{multicols}

Where the general solution are expressed in function of  arbitrary constants 
 $a$, $b$, $c$ $\in\r$:   

\begin{align*}
\gu^1&=\frac{a+b}{y^2},\quad \gu^2=\frac{c}{y^3},\\
\gud^2&=\frac{a}{y^2},\quad \gdu^2=\frac{b}{y^2},\\
\gd^2&=-\frac{2}{y}\quad\gd^1=0,\\
\gud^1&=\gdu^1=-\frac{1}{y}.
\end{align*}

\item[(d)] $\g=\left\langle\dx, \eta_1(x)\dy, \cdots, \eta_r(x)\dy \right\rangle,$ with $r\geq 1.$

Taking  $k$ such that  $1\leq k \leq r$.
The symbols  $\gij$ depends of $y$ because $\dx$ is in the algebra. From  $\na{\eta_k(x)\dy}{\dxi}{\dxj}$, obtain the system 

\begin{align*}
\eta_k(x)\dy\gu^1+\dx\eta_k(x)\left(\gud^2+\gdu^2-\gu^1\right)+\dx^2\eta_k(x)&=0,
\\
\eta_k(x)\dy\gu^1+\dx\eta_k(x)\left(\gdu^1+\gud^1\right)&=0,
\\
\eta_k(x)\dy\gud^1+\gd^1\dx\eta_k(x)&=0,
\\
\eta_k(x)\dy\gud^2+\dx\eta_k(x)\left(\gd^2-\gud^1\right)&=0,
\\
\eta_k(x)\dy\gdu^1+\gd^1\dx\eta_k(x)&=0,
\\
\eta_k(x)\dy\gdu^2+\dx\eta_k(x)\left(\gd^2-\gdu^1\right)&=0,
\\
\eta_k(x)\dy\gd^2-\gd^1\dx\eta_k(x)&=0,
\\
\eta_k(x)\dy\gd^1&=0.
\end{align*}

Using this equations we obtain that  $\alpha_k:=\frac{\dx\eta_k(x)}{\eta_k(x)}$ and $\alpha^2_k:=\frac{\dx^2\eta_k(x)}{\eta_k(x)}$ are constants.
Therefore $\eta_k(x)$ is  multiple of  $e^{\alpha_kx}$.
Finally, for  $k$ fixed, we can solve the symbols in function of  $8$ arbitrary constants  $c_{ij}^k$:

\begin{subequations}
\begin{align}
\gu^2&=c_{22}^1\alpha^3_ky^3+(c_{22}^2-c_{12}^1-c_{21}^1)\alpha^2_ky^2+(c_{11}^1-c_{21}^2-c_{12}^2)\alpha_ky-\alpha^2_ky+c_{11}^2, \label{simbolos22}
\\
\gu^1&=c_{22}^1\alpha^2_ky^2-(c_{12}^1+c_{21}^1)\alpha_ky+c_{11}^1,
\\
\gud^2&=-c_{22}^1\alpha^2_ky^2-(c_{22}^2-c_{12}^1)\alpha_ky+c_{12}^2,
\\
\gdu^2&=-c_{22}^1\alpha^2_ky^2-(c_{22}^2-c_{21}^1)\alpha_ky+c_{21}^2,
\\
\gud^1&=-c_{22}^1\alpha_ky+c_{12}^1,
\\
\gdu^1&=-c_{22}^1\alpha_ky+c_{21}^1,
\\
\gd^2&=c_{22}^1\alpha_ky+c_{22}^2,
\\
\gd^1&=c_{22}^1. \label{simbolos22b}
\end{align}
\end{subequations}

This system is compatible if the constants $\alpha_k$ are equals for all $k$. In this case the functions  $\eta_k(x)$ spam a space of dimension $1$, so $r=1$. That is, if $r=1$ and $\eta_1(x) = e^{\alpha_k x}$, there is a  $8$-dimensional space of invariant connections. Otherwise, there is not invariant connections.

\item[(e)] 
As this algebra contains the algebra of the case (22), then  $r=1$ and $\eta_1(X) = e^{\alpha_1 x}$. We obtain a system of equations including 
\eqref{simbolos22} -- \eqref{simbolos22b} and also from $\na{y\dy}{\dxi}{\dxj}$, we obtain additional equations: 
\begin{align}\label{e23-1}
y\dy\gu^1=y\dy\gud^2=y\dy\gdu^2=0
\end{align}
and the system  
\begin{subequations}\label{e23-2}
\begin{align}
y\dy\gu^1-\gu^1&=0\label{e23-2-1},
\\
y\dy\gud^1+\gud^1&=0\label{e23-2-2},
\\
y\dy\gdu^1+\gdu^1&=0\label{e23-2-3},
\\
y\dy\gd^1+2\gd^1&=0\label{e23-2-4},
\\
y\dy\gd^2+\gd^2&=0\label{e23-2-5}.
\end{align}
\end{subequations}
From \ref{e23-1}, $\gu^1, \gud^2, \gdu^2$ are constants. As $\gd^1$ is constant, from \ref{e23-2-4}, $\gd^1=0$. Combining this with the equations from the previous case we obtain $\gd^2=\gdu^1=\gud^1=0$. So we have: 
\begin{align*}
\gd^2&=\gud^1=\gdu^1=\gd^1=0,\\
\gdu^2&=c_{21}^2, \gud^2=c_{12}^2, \gu^1=c_{11}^1,\\
\gu^2&=-\alpha_1(c_{21}^2+c_{12}^2-c_{11}^1)y-\alpha ^2_1y+c_{11}^2,
\end{align*}
where  $c_{21}^2, c_{12}^2, c_{11}^1$ and $c_{11}^2$ are arbitrary constants.

\end{enumerate}

It remains to check that all other cases of transitive imprimitive infinitesimal actions, namely cases (14), (15), (16), (19), (27) and (28) do not admit invariant connections.
The Lie algebra of case (14) is contained in that of cases (15) and (16). Thus, it suffices to show the case (14). In such case, since $\dx$ and $\dy$ are in the algebra, then $\gij$ are constants. From $\na{x\dx}{\dxi}{\dxj}$, $\gij=0$ , except the  symbols $\gud^1, \gdu^1, \gd^2$. As $\na{x^2\dx}{\dx}{\dx}$ implies that $2\dx=0$, this is impossible, then in this case there is not connection.

For the case (19),  $\gij$ depends of $y$. From $\na{x\dx}{\dxi}{\dxj}$ it follows that  $\gij=0$, except  $\gud^1, \gdu^1, \gd^2$. 
From $\na{y\dy}{\dxi}{\dxj}$, we have the system 
\begin{align*}
y\dy\gud^1+\gud^1&=0\\
y\dy\gdu^1+\gdu^1&=0\\
y\dy\gd^2+\gd^2&=0
\end{align*}
Using this in $\na{Y}{\dxi}{\dxj}$, with $Y=x^2\dx+xy\dy$ we have the system 
\begin{align*}
-y\gud^1+y\gd^2+1&=0\\
-y\gdu^1+y\gd^2+1&=0\\
xy\dy\gd^2+y\gd^2+1&=0\\
y\gdu^1+y\gud^1+2&=0\\
\gdu^1&=0
\end{align*}
Which is inconsistent, in this case there is not connection.

In the case (27), $\gij$ are constants. For $Y=2x\dx+ky\dy$ with $1\leq k\leq r$, from $\na{Y}{\dxi}{\dxj}$ we have the system:
\begin{align*}
2\gu^1\dx+(4-k)\gu^2\dy=0,\\
k\gud^1\dx+2\gud^2\dy=0,\\
k\gdu^1\dx+2\gdu^2\dy=0,\\
(2k-2)\gd^1\dx+k\gd^2\dy=0.
\end{align*}
If $k=1$, then $\gij=0$ except $\gd^1$,  but from $\na{x\dy}{\dy}{\dy}$, $\gd^1=0.$
If $r=2$, from $\na{x^2\dy}{\dx}{\dx}$ we have a contradiction. So $r=1$, as $\gij=0$ from $\na{Z}{\dx}{\dx}$, with $Z=x^2\dx+xy^2\dy$ we have also a contradiction. Therefore there are not invariant connections in this case.

Finally the Lie algebra of case (28) is contained in that of case (13), then  $\gij=0$. From $\na{Z}{\dx}{\dx}$, with $Z=x^2\dx+rxy\dy$, we obtain the incompatibility of the system for the Christoffel symbols and there are no invariant connections.
\end{proof}

\section{Homogeneous surfaces}

By an homogeneous manifold we understand an smooth manifold $M$ endowed with a transitive smooth action $\rho\colon G\to {\rm Diff}(M)$ of a connected Lie group $G$. As it is well known the surface can be recovered as the quotient $M\simeq G/H$ where $H$ is the stabilizer of a point in $M$. We say that two action 
$\rho\colon G\to {\rm Diff}(M)$ and $\rho'\to {\rm Diff}(M')$ are equivalent if they induce the same transformation group in $M$, that is, $\rho(G) = \rho(G')$. Any action is always equivalent to a faithful action: we may consider the exact sequence,
$${\rm Id}\to {\rm ker}(\rho) \to  G \xrightarrow{\rho} {\rm Diff}(M)$$
and replace $G$ by $\bar G = G/\ker{\rho}$.
Given a faithful action of $G$ on $M$ we can replace $G$ by its universal cover. The action of the universal cover of $G$ in $M$ is not faithful but at least infinitesimally faithful. Therefore, there is no loss of generality in assuming that the $G$ is simply connected and the action of $G$ on $M$ is infinitesimally faithful. 

\begin{lemma}
Let $H$ be a Lie subgroup of a simply connected Lie group $G$. Then, the homogeneous manifold $G/H$ is simply connected if and only if $H$ is connected. 
\end{lemma}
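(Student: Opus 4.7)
The plan is to use the long exact sequence of homotopy groups associated to the principal $H$-bundle $H \hookrightarrow G \to G/H$. Since $G$ is a simply connected (hence path-connected) Lie group and $H$ is a closed Lie subgroup, the projection $\pi\colon G \to G/H$ is a locally trivial fibration with fiber $H$. This fibration gives the exact sequence
\begin{equation*}
\pi_1(H) \to \pi_1(G) \to \pi_1(G/H) \to \pi_0(H) \to \pi_0(G).
\end{equation*}

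First I would note that since $G$ is simply connected we have $\pi_1(G) = 0$ and $\pi_0(G) = 0$. Plugging this into the sequence, the middle part collapses to an isomorphism
\begin{equation*}
\pi_1(G/H) \;\simeq\; \pi_0(H).
\end{equation*}
I would also observe that $G/H$ is path-connected (it is the continuous image of the path-connected space $G$), so $\pi_1(G/H)$ really does measure simple-connectedness of $G/H$.

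From here the two directions follow at once. If $H$ is connected, then $\pi_0(H)$ is trivial, hence $\pi_1(G/H)=0$, i.e.\ $G/H$ is simply connected. Conversely, if $G/H$ is simply connected, then $\pi_0(H)=0$, i.e.\ $H$ is connected (recall that the connected components of a Lie group are in bijection with $\pi_0$, since each component is itself path-connected as an open submanifold).

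The only real technical point to be careful about is the existence of the fibration $H\to G\to G/H$, which requires $H$ to be a closed subgroup of $G$ in order for $G/H$ to inherit a smooth manifold structure and for the projection to be a principal $H$-bundle; this is the standard interpretation of ``Lie subgroup'' in this context (by Cartan's closed subgroup theorem), and I would invoke it without further comment. No other obstacle is anticipated.
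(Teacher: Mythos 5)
Your proof is correct, but it takes a genuinely different route from the paper. You invoke the locally trivial fibration $H \hookrightarrow G \to G/H$ and read off the conclusion from the tail of its long exact homotopy sequence, $\pi_1(G)\to\pi_1(G/H)\to\pi_0(H)\to\pi_0(G)$, using $\pi_1(G)=0$ and connectedness of $G$; both implications then drop out at once (and in fact you only need exactness of pointed sets here, not the group structure on $\pi_0(H)$). The paper argues by hand instead: for the direction ``$G/H$ simply connected $\Rightarrow$ $H$ connected'' it observes that $G/H_0\to G/H$ would be a nontrivial connected covering if $H$ were disconnected, and for the converse it lifts a loop in $G/H$ to a path in $G$ ending in $H$, closes it up using a path inside the connected group $H$, and contracts the resulting loop using simple connectedness of $G$, then projects the homotopy. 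Your argument is shorter and more conceptual, at the cost of quoting the bundle structure of $G\to G/H$ and the exact sequence as black boxes; the paper's argument is more elementary and self-contained, though it too tacitly uses local triviality of $G\to G/H$ (for the covering-space and path-lifting steps), so neither proof is more demanding in its hypotheses. One small wording correction: Cartan's theorem says that a closed subgroup is automatically an embedded Lie subgroup, not that a Lie subgroup is closed; the relevant point is simply that for $G/H$ to be the homogeneous \emph{manifold} of the statement one must take $H$ closed (and then $G\to G/H$ is a principal $H$-bundle), an assumption the paper makes implicitly as well.
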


\begin{proof}
First let us see that if $G/H$ is simply connected, then $H$ is connected. Reasoning by contradiction let us assume that $H$ is not connected a, and let $H_0$ be its connected component of the the identity. Then, $G/H_0\to G/H$ is a non-trivial connected (since $G$ is connected) covering space. Therefore $G/H$ is not simply connected.

Let us assume now that $H$ is connected. Let us denote by $x_0$ the class $H$ in $G/H$. Let $\gamma\colon [0,1]\to G/H$ is a loop based on $x_0$. We can lift $\gamma$ to a path $\tilde \gamma$ in $G$ such that $\gamma(0) = e$ and $\gamma(1)\in H$. Since $H$ is connected there is path $\tau\colon [0,1]\to H$ such that $\tau(0) = e$ and $\tau(1) = \tilde\gamma(1)$. Since $G$ is simply connected there is an homotopy with fixed extremal points $\tilde\gamma \sim \tau$. The projection of the homotopy onto $G/H$ tell us that $\gamma$ is contractible. 
\end{proof}

Therefore, simply connected homogeneous manifolds arise as quotients of simply connected Lie groups by connected Lie subgroups. On the other hand let us consider $M$ an homogeneous manifold, and $\pi\colon \tilde M \to M$ its universal cover. Since each diffeomorphism of $M$ can be lifted, up to choice of two points in the fibers of $\pi$, to a diffeomorphism of $\tilde M$, we have an exact sequence:
$${\rm Id} \to {\rm Aut}(\tilde M/M)\to {\rm Diff}(\tilde M/M)\xrightarrow{\pi_*} {\rm Diff}(M) \to {\rm Id}$$
where ${\rm Diff}(\tilde M/M)$ is the group of diffeomorphisms of $\tilde M$ that respect the fibers of $\pi$. Therefore, by taking $\tilde G$ the connected component of $\pi_*^{-1}(G)$, we have that $\tilde M$ is a simply connected homogeneous manifold with the action of $\tilde G$. Therefore, homogeneous manifold can be seen as quotients of simply connected homogeneous manifolds.

The analysis of the invariant connections for Lie algebra actions allow us to classify, up to equivalence, all the simply connected homogeneous surfaces having more than one, or exactly one, invariant connections. 

\begin{theorem}\label{th:hs}
Let $M$, endowed with an action of a connected Lie group $G$, be a simply connected homogeneous surface. Let us assume that $M$ admits at least two $G$-invariant connections. Then, $M$ is equivalent to one of the following cases:
\begin{enumerate}
\item[(a)] The affine plane $\mathbb R^2$ with one of the following transitive groups of affine transformations:
$${\rm Res}_{(2:1)}(\mathbb R^2) = \left\{ \left[ \begin{array}{c}x\\ y\end{array}\right] \mapsto A \left[ \begin{array}{c}x\\ y\end{array}\right] +  \left[ \begin{array}{c}b_1\\ b_2\end{array}\right]\mid A = 
\left[ \begin{array}{cc} \lambda^2 & 0 \\ 0 & \lambda \end{array} \right],\,\lambda>0
\right\},$$
$${\rm Trans}(\mathbb R^2) = \left\{  \left[ \begin{array}{c}x\\ y\end{array}\right] \mapsto  \left[ \begin{array}{c}x\\ y\end{array}\right] +  \left[ \begin{array}{c}b_1\\ b_2\end{array}\right] \mid b_1,b_2\in\mathbb R \right\},$$
$${\rm Res}_{(0:1)}(\mathbb R^2) = \left\{  \left[ \begin{array}{c}x\\ y\end{array}\right] \mapsto A  \left[ \begin{array}{c}x\\ y\end{array}\right] +  \left[ \begin{array}{c}b_1\\ b_2\end{array}\right]\mid A = 
\left[ \begin{array}{cc} 1 & 0 \\ 0 & \lambda \end{array} \right],\,\lambda>0
\right\}.$$
\item[(b)] ${\rm SL}_2(\mathbb R)/U$ where $U$ is the subgroup of superior unipotent matrices
$$ U = \left\{ A\in {\rm SL}_2(\mathbb R) \mid A = 
\left[ \begin{array}{cc} 1 & \lambda \\ 0 & 1 \end{array} \right]
\right\}.$$
\item[(c)] $\mathbb R\ltimes \mathbb R$ acting on itself by left translations. 
\item[(d)] $G/H$ with $G = \mathbb R^2\ltimes \mathbb R$ and $H = (\mathbb R\times 0)\ltimes 0$. 
\end{enumerate}
\end{theorem}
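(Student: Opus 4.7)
The strategy is to combine the local classification of transitive Lie algebra actions on surfaces (Tables \ref{tabla:acciones primitivas} and \ref{tabla:imprimitive actions}) with the already computed spaces of invariant connections (Theorems \ref{th:primitive} and \ref{th:imprimitive}), then to integrate each surviving local model to a simply connected homogeneous surface $\tilde G/H$.

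Since $G$ is connected, one has ${\rm Cnx}(M)^G = {\rm Cnx}(M)^{\mathfrak g}$, where $\mathfrak g\subset \mathfrak X(M)$ is the Lie algebra of the infinitesimal $G$-action. The hypothesis of admitting at least two $G$-invariant connections then becomes $\dim_{\mathbb R}{\rm Cnx}(M)^{\mathfrak g}\geq 1$, a finite number by Proposition \ref{prop:finitedim_inf}. Since $G$ acts transitively, $\mathfrak g$ falls into one of the transitive classes of Tables \ref{tabla:acciones primitivas} and \ref{tabla:imprimitive actions}. Theorem \ref{th:primitive} excludes all primitive cases, as those admit at most one invariant connection. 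From Theorem \ref{th:imprimitive}, the transitive imprimitive actions with $\dim {\rm Cnx}(M)^{\mathfrak g}\geq 1$ are exactly its cases (b), (c), (d), (e), corresponding respectively to Olver case (12) with $\alpha=1/2$, case (18), case (22) with $r=1$ and $\eta_1(x)=e^{\alpha x}$, and case (23) with $r=1$ and $\eta_1(x)=e^{\alpha x}$.

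Next I would integrate each local Lie algebra to a global presentation. By the discussion preceding this theorem, any simply connected homogeneous surface arises as $\tilde G/H$ with $\tilde G$ the simply connected Lie group whose Lie algebra is $\mathfrak g$ and $H$ the connected closed subgroup with Lie algebra equal to the stabilizer $\mathfrak g_p$ at a base point. For each of the surviving cases I would: (i) identify $\mathfrak g$ abstractly ($\mathbb R \ltimes \mathbb R^2$ for case (12) with $\alpha=1/2$; $\mathfrak{sl}_2(\mathbb R)$ for case (18); the abelian or non-abelian two-dimensional Lie algebra for case (22), depending on whether $\alpha=0$ or not; and $\mathbb R \times (\mathbb R \ltimes \mathbb R)$ or $\mathbb R^2 \ltimes \mathbb R$ for case (23), similarly); (ii) read off $\mathfrak g_p$ by evaluating the generators at a convenient base point in the canonical coordinates of the table; and (iii) recognize the resulting $\tilde G/H$ as one of the six surfaces listed in (a)--(d). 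I would also verify that the continuous parameter $\alpha\neq 0$ in cases (22) and (23) is absorbed by a rescaling of $x$, so each family yields a single equivalence class of homogeneous surface, producing six inequivalent models altogether.

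The main obstacle I anticipate is the bookkeeping in the integration step: for case (18) the simply connected cover $\widetilde{{\rm SL}_2(\mathbb R)}$ is not a linear group, and for case (23) with $\alpha\neq 0$ one must identify the correct codimension-$1$ subgroup $H$ inside $\mathbb R^2 \ltimes \mathbb R$ so that the resulting quotient is both $2$-dimensional and simply connected. The final pairwise inequivalence of the six candidate surfaces then follows by comparing the isomorphism types of $\mathfrak g$ and the dimensions and structures of the stabilizers $H$.
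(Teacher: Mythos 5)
Your proposal is correct and follows essentially the same route as the paper: reduce to $\mathfrak g$-invariance for the connected group, use Theorems \ref{th:primitive} and \ref{th:imprimitive} to isolate exactly the cases (12) with $\alpha=\tfrac12$, (18), (22) with $r=1$ and (23) with $r=1$, and then integrate these Lie algebra actions (splitting (22)/(23) according to whether the semidirect product is trivial, i.e. $\alpha=0$ or not) to the simply connected models (a)--(d). The paper's proof is just a terser version of your integration step, so no substantive difference remains.
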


\begin{proof}
Let $M$, endowed with an action of a connected Lie group $G$, be a simply connected homogeneous surface. Let us consider the infinitesimal lie algebra action induced by the action of $G$ in $M$. Since $G$ is connected then the space of $G$-invariant connections coincide with that of ${\rm Lie}(G)$-invariant connections. 

In virtue of Theorems \ref{th:primitive} and  \ref{th:imprimitive} if the infinitesimal action has more than one invariant connection, then it corresponds to cases (12) with $\alpha = \frac{1}{2}$, (18), (22) with $r=1$ or (23) with $r=1$ in table \ref{tabla:imprimitive actions}. Without loss of generality, we assume that $G$ is simply connected. Then, by third Lie theorem, it is completely determined by its Lie algebra. Therefore, by integrating the respective Lie algebras, we deduce the following:
\begin{itemize}
    \item Any homogeneous surface corresponding to case (12) with $\alpha\neq 0$ or cases (22, 23) with trivial semidirect product in Table \ref{tabla:imprimitive actions}, is equivalent to one listed in the case (a) of the statement;
    \item any homogeneous surface corresponding to case (18) in Table \ref{tabla:imprimitive actions}, is equivalent to that of case (b);
    \item any homogeneous surface corresponding to case (22) with $r=1$ in Table \ref{tabla:imprimitive actions} and non trivial semidirect product is equivalent to that of case (c);
    \item any homogeneous surface corresponding to case (23) with $r=1$ in Table \ref{tabla:imprimitive actions} and non trivial semidirect product is equivalent of that of case (d).
\end{itemize}
This completes the proof.
\end{proof}

\begin{remark}
In the cases (c) and (d) of Theorem \ref{th:hs} we assume that the semidirect product is not trivial. Otherwise we fall in the case (a).
\end{remark}

\begin{theorem}\label{th:hs2}
Let $M$, with the action of $G$, be a simply connected homogeneous surface. Let us assume that $M$ admits exactly one $G$-invariant connection. Then, $M$ is equivalent to one of the following cases:
\begin{enumerate}
\item[(a)] The affine plane $\mathbb R^2$ with $G$ any connected transitive subgroup of the group ${\rm Aff}(\mathbb R^2)$ of affine transformations containing the group of translations and not conjugated with any of the groups,
$${\rm Res}_{(2:1)}(\mathbb R^2) = \left\{ \left[ \begin{array}{c}x\\ y\end{array}\right] \mapsto A \left[ \begin{array}{c}x\\ y\end{array}\right] +  \left[ \begin{array}{c}b_1\\ b_2\end{array}\right]\mid A = 
\left[ \begin{array}{cc} \lambda^2 & 0 \\ 0 & \lambda \end{array} \right],\,\lambda>0
\right\},$$
$${\rm Trans}(\mathbb R^2) = \left\{  \left[ \begin{array}{c}x\\ y\end{array}\right] \mapsto  \left[ \begin{array}{c}x\\ y\end{array}\right] +  \left[ \begin{array}{c}b_1\\ b_2\end{array}\right] \mid b_1,b_2\in\mathbb R \right\},$$
$${\rm Res}_{(0:1)}(\mathbb R^2) = \left\{  \left[ \begin{array}{c}x\\ y\end{array}\right] \mapsto A  \left[ \begin{array}{c}x\\ y\end{array}\right] +  \left[ \begin{array}{c}b_1\\ b_2\end{array}\right]\mid A = 
\left[ \begin{array}{cc} 1 & 0 \\ 0 & \lambda \end{array} \right],\, \lambda>0
\right\}.$$
(this case includes the euclidean plane)

\item[(b)] ${\rm SL}_2(\mathbb R)/H$ where $H$ is the group of special diagonal matrices,
$$H = \left\{ A \in {\rm SL}_{2}(\mathbb R) \mid  
A =  \left[ \begin{array}{cc} \lambda & 0 \\ 0 & \lambda^{-1} \end{array} \right], \,\, \lambda>0 \right\}.$$
\item[(c)] The hyperbolic plane 
$$\mathbb H = \{z\in\mathbb C \mid {\rm Im}(z)>0 \}$$ 
with the group ${\rm SL}_2(\mathbb R)$ of hyperbolic rotations.
\item[(d)] Spherical surface, 
$$S^2 = \{(x,y,z)\in \mathbb R^3\mid x^2 + y^2 + z^2 =1 \}$$ 
with its group ${\rm SO}_3(\mathbb R)$ of rotations.
\end{enumerate}
\end{theorem}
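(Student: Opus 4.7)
The plan follows the same strategy as Theorem \ref{th:hs}. After the reductions at the beginning of this section I may assume $G$ is simply connected and $M=G/H$ with $H$ a connected closed Lie subgroup; because $G$ is connected, the $G$-invariant connections coincide with the $\mathrm{Lie}(G)$-invariant ones, so the problem is controlled entirely by the infinitesimal action.

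From Theorems \ref{th:primitive} and \ref{th:imprimitive} I read off the entries of the classification tables whose affine space of invariant connections is zero-dimensional (a single invariant connection). These are: from Table \ref{tabla:acciones primitivas}, the primitive cases 1 (for every $\alpha\geq 0$), 2, 3, 4, 5 and 6; and, via items (a) and (a') of Theorem \ref{th:imprimitive}, the transitive imprimitive cases 12 (with $\alpha\neq 1/2$), 13, 17, 24 with $r=1$, 25 with $r=1$ and 26 with $r=1$ of Table \ref{tabla:imprimitive actions}. Every other entry of these tables either admits no invariant connection at all, or admits a family of strictly positive dimension, and in the latter case has already been addressed in Theorem \ref{th:hs}.

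For each such Lie algebra I then integrate to the simply connected Lie group and match the resulting quotient with one of the four cases of the statement. Case 2 of Table \ref{tabla:acciones primitivas} integrates to $\mathrm{SL}_2(\mathbb R)$ on the upper half-plane, giving case (c), and case 3 integrates to $\mathrm{SO}_3(\mathbb R)$ on $S^2$, giving case (d). Case 17 of Table \ref{tabla:imprimitive actions}, realised by the diagonal Möbius vector fields $\{\partial_x+\partial_y,\ x\partial_x+y\partial_y,\ x^2\partial_x+y^2\partial_y\}$, integrates to $\mathrm{SL}_2(\mathbb R)$ acting on the off-diagonal of $\mathbb R\times\mathbb R$; computing the isotropy algebra at a base point $(x_0,y_0)$ with $x_0\neq y_0$ exhibits it as the 1-parameter subgroup generated by a real-split element of $\mathfrak{sl}_2(\mathbb R)$, which is conjugate to the positive diagonal subgroup $H$, giving case (b). All the remaining Lie algebras (cases 1, 4, 5, 6 of Table \ref{tabla:acciones primitivas} and cases 12 with $\alpha\neq 1/2$, 13, 24, 25, 26 with $r=1$ of Table \ref{tabla:imprimitive actions}) are semidirect products of the form $L\ltimes\mathbb R^2$ with $L\subseteq\mathrm{GL}_2(\mathbb R)$, so they integrate to connected transitive subgroups of $\mathrm{Aff}(\mathbb R^2)$ containing all translations, and therefore yield case (a). The three excluded subgroups $\mathrm{Res}_{(2:1)}$, $\mathrm{Trans}$ and $\mathrm{Res}_{(0:1)}$ are precisely those appearing in Theorem \ref{th:hs} (arising respectively from case 12 with $\alpha=1/2$, from the abelian translation algebra, and from the degenerate case 12 with $\alpha=0$), and thus have to be excluded here because each of them admits more than one invariant connection.

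The principal obstacle is the identification in case 17: one must compute the isotropy algebra of the concrete vector-field model at a base point, recognise its generator as a real-split element of $\mathfrak{sl}_2(\mathbb R)$, and verify that the one-parameter subgroup it generates in $\mathrm{SL}_2(\mathbb R)$ is conjugate to the positive diagonal subgroup $H$. A secondary task is to establish the converse assertion in case (a), namely that every connected transitive subgroup of $\mathrm{Aff}(\mathbb R^2)$ containing all translations and not conjugate to $\mathrm{Res}_{(2:1)}$, $\mathrm{Trans}$ or $\mathrm{Res}_{(0:1)}$ arises as the integration of one of the Lie algebras on the list above; this is a consequence of the exhaustiveness of Olver's classification, but requires tracking real conjugation classes in $\mathrm{Aff}(\mathbb R^2)$ rather than merely abstract isomorphism classes of Lie algebras.
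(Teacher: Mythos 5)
Your proposal is correct and follows essentially the same route as the paper: reduce to the infinitesimal action, extract from Theorems \ref{th:primitive} and \ref{th:imprimitive} the cases with a unique invariant connection, and integrate each Lie algebra to match cases (a)--(d). Your list of cases (primitive 1--6 and imprimitive 12 with $\alpha\neq 1/2$, 13, 17, 24--26 with $r=1$) agrees with the paper's (and silently corrects its apparent typo of listing case (3) where (4) is meant), and your extra detail on the isotropy in case (17) and on the converse for case (a) only adds precision to what the paper states without proof.
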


\begin{proof}
We follow the same reasoning that in the proof of Theorem \ref{th:hs}. 
In virtue of Theorems \ref{th:primitive} and  \ref{th:imprimitive} if the infinitesimal action has exactly one one invariant connection, then it corresponds to cases (1), (3), (5), (6), (12) with $\alpha \neq \frac{1}{2}$, (13), (24) with $r=1$, (25) with $r=1$, (26) with $r=1$, (17), (2) or (3). Then we check that cases (1), (3), (6), (5),(12) with $\alpha\neq 1/2$, (13), (24) with $r=1$, (25) with $r=1$ and (26) with $r=1$, all ot them correspond to case (a) in the statement. Finally, case (17) corresponds to case (b), case (2) corresponds to case (c) and case (3) corresponds to case (d).
\end{proof}

\begin{remark}
Note that the hyperbolic plane, case (c) in Theorem \ref{th:hs2}, corresponds to the remaining $2$-dimensional simply connected quotient of ${\rm SL}_2(\mathbb R)$; $\mathbb H \simeq {\rm SL}_2(\mathbb R)/H$ where
$$H = \left\{  \left[ \begin{array}{cc} a & -b \\ b & a \end{array} \right] \mid a^2 + b^2 = 1 \right\}.$$
\end{remark}

\subsection*{Acknowledgements}

The authors acknowledge the support of their host institutions Universidad de Antioquia and Universidad Nacional de Colombia - Sede Medell\'in. The research of D.B.-S. has been partially funded by Colciencias project ''Estructuras lineales en geometr\'ia and Topolog\'ia'' 776-2017 57708 (HERMES 38300).

\bibliographystyle{plain}
\bibliography{references}
\end{document}